\newtheorem{thm}{Theorem}[section]
\newtheorem{rem}[thm]{Remark}
\newtheorem{ex}[thm]{Example}
\newtheorem{prop}[thm]{Proposition}
\newtheorem{nota}[thm]{Notation}
\newtheorem{cor}[thm]{Corollary}
\newtheorem{clm}[thm]{Claim}
\theoremstyle{definition}
\newtheorem{defi}[thm]{Definition}
\renewcommand{\Bbb}{\mathbb}
\def\NZQ{\Bbb}
\def\CC{{\NZQ C}}
\def\II{{\NZQ I}}
\def\PP{{\NZQ P}}
\def\RR{{\NZQ R}}
\def\SS{{\NZQ S}}
\def\TT{{\NZQ T}}
\def\ZZ{{\NZQ Z}}
\def\A{{\mathcal A}}
\def\B{{\mathcal B}}
\newcommand{\Zt}[0]{\mathcal Z}
\title{A classification of combinatorial types of discriminantal arrangements}
\author{So Yamagata}
\address{Department of Mathematics,
Hokkaido University, Japan.}
\email{so.yamagata@math.sci.hokudai.ac.jp}
\subjclass{52C35 05B35}
\keywords{hyperplane arrangement, intersection lattice, braid arrangement, discriminantal arrangement}
\begin{document}

\begin{abstract}
    Manin and Schechtman introduced a family of arrangements of hyperplanes generalizing classical braid arrangements, which they called the {\it discriminantal arrangements}. Athanasiadis proved a conjecture by Bayer and Brandt providing a full description of the combinatorics of discriminantal arrangements in the case of \textit{very generic} arrangements. Libgober and Settepanella described a sufficient geometric condition for given arrangements to be \textit{non-very generic} in terms of the notion of dependency for a certain arrangement. Settepanella and the author generalized the notion of dependency introducing $r$-sets and $K_\TT$-vector sets, and provided a sufficient condition for non-very genericity but still not convenient to verify by hand. In this paper, we give a classification of the $r$-sets, and a more explicit and tractable condition for non-very genericity.
\end{abstract}

\maketitle
\section{Introduction}\label{sec:intro}
A codimension one subspace in a vector space is called a \textit{hyperplane}, and a finite set of hyperplanes is called an \textit{arrangement of hyperplanes} or simply \textit{arrangement}. A basic example of an arrangement is the \textit{braid arrangement} $Br(n) = \{ H_{i,j} \}_{1 \leq i<j \leq n}$, where $H_{i,j} = \{ (x_1, \dots, x_n) \in \CC^k \mid x_i = x_j, \ i \neq j \}$. 

In 1989, Manin and Schechtman \cite{MS} introduced the \textit{discriminantal arrangement} as a generalization of the braid arrangement. In some literatures it is also called the \textit{Manin-Schechtman arrangement}. In brief, the arrangement is defined as follows. 
For a fixed generic arrangement $\A^0 = \{ H_1^0, \dots, H_n^0 \}$ consider the space $\SS(\A^0)$ of all parallel translations of hyperplanes in $\A^0$, which is naturally isomorphic to $\CC^n$. The subset of the space, which consists of all translations of hyperplanes failing to be general position is a hyperplane in $\SS(\A^0) \simeq \CC^n$. The set of such hyperplanes is the discriminantal arrangement and denoted by $\B(n,k,\A^0), n,k \in {\bf N}$ for $k \ge 2$. In particular, $\B(n,1)=\B(n,1,\A^0)$ coincides with the classical braid arrangement.

The discriminantal arrangements relate various areas of mathematics such as vanishing of cohomology of bundles on toric varieties (see \cite{Per}), the higher braid groups and higher categorical perspective (see \cite{KV1}, \cite{KV2}, \cite{KV3}, \cite{Law}, \cite{Koh}), and higher Bruhat orders (see \cite{FZ}, \cite{Zie}).

Athanasiadis \cite{Atha} pointed out that Crapo \cite{Crapo} was doing pioneering work in which he introduced the \textit{geometry of circuits}. In \cite{Crapo} he studied the matroid $M(n,k, \mathcal{C})$ of circuits of the configuration $\mathcal{C}$ of $n$ generic points in $\RR^k$. The circuits of the matroid $M(n,k, \mathcal{C})$ are now the hyperplanes of $\B(n,k,\A^0)$, where $\A^0$ is an arrangement of $n$ hyperplanes in $\RR^k$ orthogonal to the vectors joining the origin with the $n$ points in $\mathcal{C}$ (for further development see \cite{CR}).

Both Manin-Schechtman \cite{MS} and Crapo \cite{Crapo} were mainly interested in arrangements $\B(n,k,\A^0)$ for which the intersection lattice is constant when $\A^0$ varies within a Zariski open set $\Zt$ in the space of generic arrangements of $n$ hyperplanes in $k$ dimensional space. Crapo showed that, in this case, the matroid $M(n,k)$ is isomorphic to the Dilworth completion of the $k$-th lower truncation of the Boolean algebra of rank $n$. 

As for combinatorics of $\B(n,k,\A^0)$ for $\A^0$ to be very generic several results were given. In 1997, Bayer and Brandt (see \cite{BB}) conjectured a full description of the combinatorics of $\B(n,k,\A^0)$ when $\A^0$ belongs to $\Zt$, and it is proved by Athanasiadis \cite{Atha} in 1999. Following \cite{Atha} (more precisely Bayer and Brandt), we call arrangements $\A^0$ in $\Zt$ {\it very generic}, and {\it non-very generic} otherwise.

On the other hand, understanding the combinatorics of $\B(n,k,\A^0)$ for $\A^0$ to be non-very generic has still been incomplete. The first example of non-very generic arrangement was given by Crapo \cite{Crapo} in 1985. The arrangement consists of $6$ generic lines in $\RR^2$, which admits translations that are respectively sides and diagonals of a quadrilateral (see Figure \ref{fig:quadline}). 

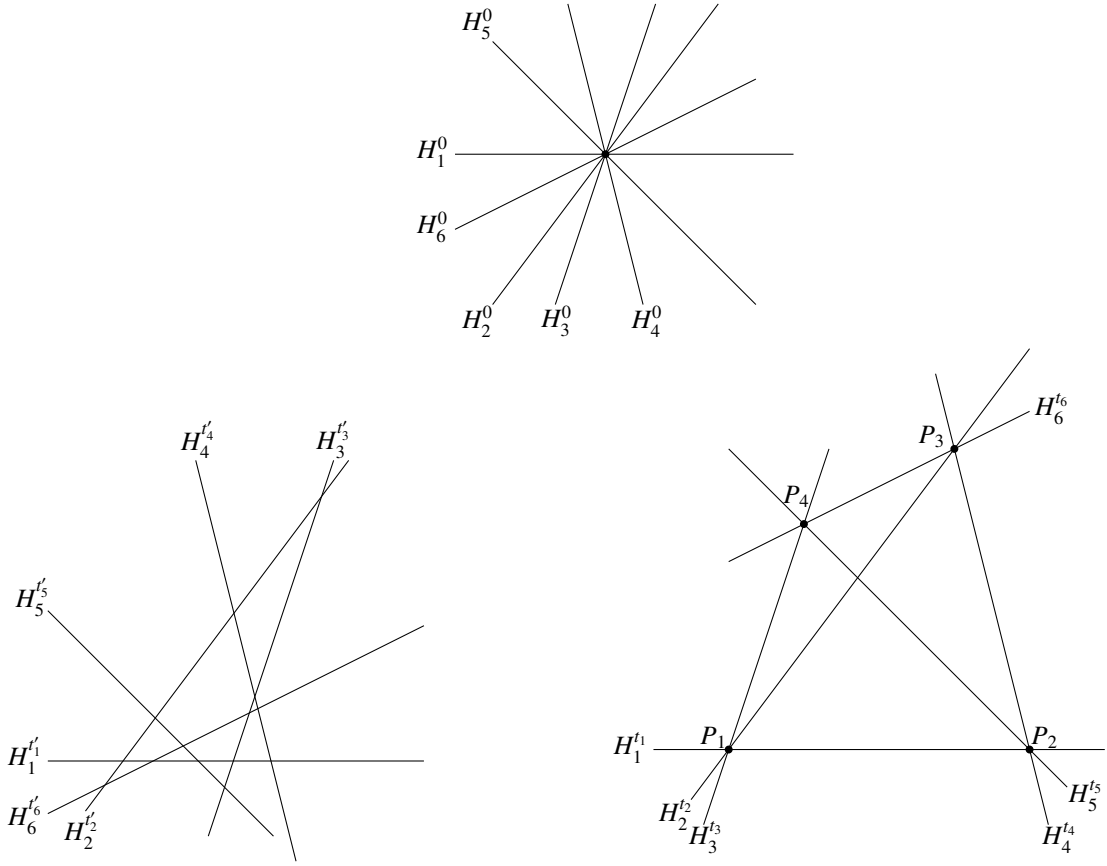
\begin{figure}[h]
    \centering
    \begin{tikzpicture}
        \coordinate (0) at (-2,0);
        \coordinate (1) at (-3/2,-2);
        \coordinate (2) at (-2/3,-2);
        \coordinate (3) at (1/2,-2);
        \coordinate (4) at (2,-2);
        \coordinate (5) at (5/2,0);
        \coordinate (6) at (2,1);
        \coordinate (7) at (3/2,2);
        \coordinate (8) at (-1/2,2);
        \coordinate (9) at (2/3,2);
        \coordinate (10) at (-3/2,3/2);
        \coordinate (12) at (-2,-1);
        
        \coordinate [label=$H_1^0$] (H1) at (-2.3,-0.3);
        \coordinate [label=$H_2^0$] (H2) at (-1.7,-2.55);
        \coordinate [label=$H_3^0$] (H3) at (-0.65,-2.55);
        \coordinate [label=$H_4^0$] (H4) at (0.55,-2.55);
        \coordinate [label=$H_5^0$] (H5) at (-1.7,1.4);
        \coordinate [label=$H_6^0$] (H6) at (-2.3,-1.3);
        \coordinate (O) at (0,0);
        
        \begin{scope}
            \draw (0) -- (5);
            \draw (1) -- (7);
            \draw (3) -- (8);
            \draw (2) -- (9);
            \draw (4) -- (10);
            \draw (6) -- (12);
            \fill (O) circle (1.5pt);
            
        \end{scope}
    \end{tikzpicture}\\
    \begin{minipage}[b]{0.48\columnwidth}
        \begin{tikzpicture}
            \coordinate (0) at (-1,0);
            \coordinate (1) at (-1/2,-2/3);
            \coordinate (2) at (4/3 - 1/5,-1);
            \coordinate (3) at (2/3+0.5 - 1/5,4);*
            \coordinate (4) at (-1,2);*
            \coordinate (5) at (4,0);
            \coordinate (6) at (-1,-1/2-0.2);*
            \coordinate (7) at (3,4);
            \coordinate (8) at (2 - 1/5+0.5,-4/3);*
            \coordinate (9) at (3 - 1/5,4);
            \coordinate (10) at (2,-1);*
            \coordinate (12) at (4,2-0.2);*
            \coordinate [label=$H_1^{t'_1}$] (H1) at (-1.3,-0.3);
            \coordinate [label=$H_2^{t'_2}$] (H2) at (-0.55,-1.3);
            \coordinate [label=$H_3^{t'_3}$] (H3) at (2.8,3.9);
            \coordinate [label=$H_4^{t'_4}$] (H4) at (0.5+0.5,3.9);
            \coordinate [label=$H_5^{t'_5}$] (H5) at (-1.2,1.3+0.5);
            \coordinate [label=$H_6^{t'_6}$] (H6) at (-1.3,-1.1);
            \begin{scope}
                \draw (0) -- (5); 
                \draw (1) -- (7); 
                \draw (3) -- (8); 
                \draw (2) -- (9); 
                \draw (4) -- (10); 
                \draw (6) -- (12); 
            \end{scope}
        \end{tikzpicture}
    \end{minipage}
    \begin{minipage}[b]{0.48\columnwidth}
        \begin{tikzpicture}
            \coordinate (0) at (-1,0);
            \coordinate (1) at (-1/2,-2/3);
            \coordinate (2) at (-1/3,-1);
            \coordinate (3) at (17/4,-1);
            \coordinate (4) at (9/2,-1/2);
            \coordinate (5) at (5,0);
            \coordinate (6) at (4,9/2);
            \coordinate (7) at (4,16/3);
            \coordinate (8) at (11/4,5);
            \coordinate (9) at (4/3,4);
            \coordinate (10) at (0,4);
            \coordinate (12) at (0,5/2);
            \coordinate (p1) at (0,0);
            \coordinate (p2) at (4,0);
            \coordinate (p3) at (3,4);
            \coordinate (p4) at (1,3);
            
            \coordinate [label=$H_1^{t_1}$] (H1) at (-1.3,-0.3);
            \coordinate [label=$H_2^{t_2}$] (H2) at (-0.7,-1.2);
            \coordinate [label=$H_3^{t_3}$] (H3) at (-0.3,-1.5);
            \coordinate [label=$H_4^{t_4}$] (H4) at (4.4,-1.5);
            \coordinate [label=$H_5^{t_5}$] (H5) at (4.75,-0.95);
            \coordinate [label=$H_6^{t_6}$] (H6) at (4.3,4.2);
            \coordinate [label=$P_1$] (P1) at (-0.2,-0.1);
            \coordinate [label=$P_2$] (P2) at (4.2,-0.1);
            \coordinate [label=$P_3$] (P3) at (2.7,3.9);
            \coordinate [label=$P_4$] (P4) at (0.9,3.1);
            
            \begin{scope}
                \draw (0) -- (5);
                \draw (1) -- (7);
                \draw (3) -- (8);
                \draw (2) -- (9);
                \draw (4) -- (10);
                \draw (6) -- (12);
                \fill (p1) circle (1.5pt);
                \fill (p2) circle (1.5pt);
                \fill (p3) circle (1.5pt);
                \fill (p4) circle (1.5pt);
                
            \end{scope}
        \end{tikzpicture}
    \end{minipage}
    \caption{Central generic arrangement of 6 lines in $\RR^2$, its generic translation on the left and its non-(very) generic translation on the right.}\label{fig:quadline}
\end{figure}

However, its non-very genericity did not get much attention at that time. In 1994, after the definition of discriminantal arrangement by Manin-Schechtman, Falk \cite{Falk} constructed an arrangement of hyperplanes spanned by some generic points, and showed that the arrangement $\B(n, k, \A^0 )$ realizes an adjoint of the matroid determined by the points. Using the description he provided a second example of non-very generic arrangement. In this direction, Numata-Takemura \cite{NT} and Koizumi-Numata-Takemura \cite{KNT} gave some computational results on the characteristic polynomials of the discriminantal arrangements.

In 2018, the first general results on non-very generic arrangements were provided. In \cite{LS}, Libgober and Settepanella described a sufficient {\it geometric} condition for the arrangement $\A^0$ to be non-very generic. This condition ensures that $\B(n, k, \A^0)$ admits codimension $2$ strata of multiplicity $3$, which do not exist in the very generic case. It is given in terms of the notion of \textit{dependency} for the arrangement $\A_{\infty}$ in $\PP^{k-1}$ of hyperplanes $H_{\infty,1}, \dots, H_{\infty,n}$, which are the intersections of projective closures of $H_1^0, \dots, H_n^0 \in \A^0$ with the hyperplane at infinity. Their main result shows that $\B(n,k,\A^0), k>1$ admits a codimension 2 stratum of multiplicity $3$ if and only if $\A_{\infty}$ is an arrangement in $\PP^{k-1}$ admitting a restriction which is a dependent arrangement. This construction generalizes Falk's example which corresponds to the case $n=6, k=3$ and which has been object of study in two subsequent papers by Sawada, Settepanella and the author \cite{SSY1}, \cite{SSY2}. 

In 2021, Settepanella and the author \cite{SY} generalized the dependency condition given in \cite{LS}, providing a sufficient condition for the existence of non-very generic intersections in rank $r \geq 2$, i.e., intersections which do not exist in $\B(n,k,\A^0), \A^0 \in \Zt$ in terms of $r$-sets and $K_\TT$-vector sets.
More recently in 2022, Settepanella and the author \cite{SY2} gave a linear condition for non-very genericity as a continuation of \cite{SY}. Some related works have also been continued. In \cite{DPS}, Das-Palezzato-Settepanella  provided examples of the classification of special configurations of points in the $k$-dimensional space relating to the combinatorics of $\B(n,2,\A^0)$ and $\B(n,3,\A^0)$ for $\A^0$ to be non-very generic. In \cite{SS} Saito-Settepanella gave a characterization and a classification of few non-very generic arrangements in low dimensional space. In particular, they classified the combinatorics of $\B(6,3,\A^0)$ over commutative field of characteristic 0. 

Though in \cite{SY} and \cite{SY2}, they constructed conditions for non-very genericity they are still difficult to check by hand. The purpose of this paper is that we classify $r$-sets $\TT$, which are introduced in \cite{SY} and \cite{SY2}, into \textit{non-intersecting type} and \textit{intersecting type}, and then give a sufficient condition to have $K_\TT$-vector sets. This paper is organized as follows. In Section \ref{sec:prem} we recall basic definitions of discriminantal arrangements, $r$-sets and $K_\TT$-vector sets. We also recall a sufficient condition for non-very genericity following \cite{SY2}. In Section \ref{sec:ex} we see examples of constructions of non-very generic arrangements, which sets the stage for the later sections. In Section \ref{sec:classification} we classify $r$-sets into non-intersecting and intersecting types, which are purely combinatorial descriptions. In Section \ref{sec:good_rs} we give a sufficient condition to have the $K_\TT$-vector sets in the case of $r$-sets of non-intersecting type. Moreover we define a special class of $r$-sets of intersecting type, which we call the good $rs$-partition, and give a sufficient condition to have the $K_\TT$-vector sets. This constitutes the first classification of $r$-sets of intersecting type. 
\section{Preliminaries}\label{sec:prem}
\subsection{A hyperplane arrangement and disriminantal arrangement}
Let us consider an arrangement of hyperplanes in $\CC^k$, i.e., a finite set of hyperplanes in $\CC^k$. For a linear hyperplane $H^0$ define its translation by $H^t = H^0 + \alpha t$, where $\alpha$ is a normal vector to $H^0$, and $t \in \CC$. We denote an arrangement of linear hyperplanes by $\A^0 = \{ H_1^0, \dots, H_n^0 \}$ and its translation by $\A^t = \{ H_1^{t_1}, \dots, H_n^{t_n} \}$, where $H_i^{t_i}$ is a translation of $H_i^0$ throughout this paper. We say that an arrangement of hyperplanes is \textit{generic} if for all $J \subset [n]$, $\lvert J \rvert = k$ normal vectors $\alpha_i$ to $H_i^0$, $i \in J$ are linearly independent. Hyperplanes $H_i^{t_i}$, $i=1, \dots, n$ are said to be \textit{in general position} if the following two conditions are satisfied:
\begin{itemize}
    \item For $1 \leq m \leq k$, the intersection of any $m$ hyperplanes has dimension $k - m$,
    \item For $m > k$, the intersection of any $m$ hyperplanes is empty.
\end{itemize}

Let $\A^0 = \{ H_1^0, \dots, H_n^0 \}$ be a generic arrangement in $\CC^k, k < n$. The space of parallel translations $\SS(\A^0)$ (or simply $\SS$ when dependence on $H_i^0$ is clear or not essential) is the space of $n$-tuples of translations $H_1^{t_1}, \dots, H_n^{t_n}$ such that either $H_i^{t_i} \cap H_i^0 = \emptyset$ or $H_i^{t_i} = H_i^0$ for $i=1, \dots, n$. \\
We can identify $\SS$ with $n$-dimensional affine space $\CC^n$ in such a way that $(H_1^0, \dots, H_n^0)$ corresponds to the origin. In particular, an ordering of hyperplanes in $\A^0$ determines the coordinate system in $\SS$ (see \cite{LS}). 

For a fixed generic arrangement $\A^0$, consider the closed subset of $\SS$ formed by those collections which fail to form a general position. This subset of $\SS$ is a union of hyperplanes $D_L \subset \SS$ (see \cite{MS}). Each hyperplane $D_L$ corresponds to a subset $L = \{ i_1, \dots, i_{k+1} \} \subset [n]$, and it consists of $n$-tuples of translations of hyperplanes $H_1^0, \dots, H_n^0$ in which translations of $H_{i_1}^0, \dots, H_{i_{k+1}}^0$ fail to form a general position. The arrangement $\B(n, k, \A^0)$ of hyperplanes $D_L$ is called $discriminantal$ $arrangement$ and has been introduced by Manin and Schechtman in \cite{MS}. Although they defined the discriminantal arrangement starting from a general position arrangement instead of its centrally translated one, we adopt the latter for convenience.
\subsection{(Non) very generic arrangements and $r$-simple intersections}\label{subsec:simple}
It is well known (see among others \cite{Crapo}, \cite{MS}) that there exists an open Zariski set $\Zt$ in the space of generic arrangements of $n$ hyperplanes in $\CC^k$ such that the intersection lattice of the discriminantal arrangement $\B(n,k,\A^0)$ is independent from the choice of the generic arrangement $\A^0 \in \Zt$. Bayer and Brandt \cite{BB} called the arrangements $\A^0 \in \Zt$ \textit{very generic} and the ones $\A^0 \notin \Zt$ \textit{non-very generic}. The name very generic comes from the fact that in this case the cardinality of the intersection lattice of $\B(n,k,\A^0)$ is the largest possible for all generic arrangement of $n$ hyperplanes in $\CC^k$. 

In \cite{Crapo}, Crapo showed that the intersection lattice of $\B(n,k,\A^0)$ for very generic arrangement $\A^0$ is isomorphic to the Dilworth completion $D_k(B_n)$ of a $k$-times lower-truncated Boolean algebra. In \cite{Atha} Athanasiadis gave a more precise description that the intersection lattice of $\B(n,k,\A^0)$ for very generic arrangement $\A^0$ is isomorphic to the lattice $P(n,k)$ defined as follows. $P(n,k)$ is the collection of all sets of the form $\{ S_1, \dots, S_r \}$, where $S_i \subset [n]$, $\lvert S_i \rvert \geq k+1$ such that 
\begin{equation}\label{condi:ineq}
    \lvert \bigcup_{i \in I} S_i \rvert  > k + \sum_{i \in I} (\lvert S_i \rvert - k)
\end{equation}
for all $I \subset [r], \lvert I \rvert \geq 2$. The order on $P(n,k)$ is given by letting $\{ S_1, \dots, S_r \} < \{ T_1, \dots, T_{r'} \}$ if for each $1 \leq i \leq r$ there exists $1 \leq j \leq r'$ such that $S_i \subset T_j$. The isomorphism was first conjectured by Bayer-Brandt in \cite{BB}. 

In \cite{LS}, Libgober-Settepanella gave a full description of rank 2 elements of the intersection lattice of $\B(n,k,\A^0)$. Based on their result, Sawada-Settepanella and the author \cite{SSY1}, \cite{SSY2} showed that hyperplanes in the non-very generic arrangements give rise to special configurations such as the Pappus or Hesse configuration. In \cite{Ku}, Kumar showed that certain line arrangements in the plane always give rise to non-very generic arrangements. In \cite{DPS}, Das-Palezzato-Settepanella  provided examples of the classification of special configurations of points in the $k$-dimensional space relating to the combinatorics of $\B(n,2,\A^0)$ and $\B(n,3,\A^0)$ for $\A^0$ to be non-very generic. In \cite{SS} Saito-Settepanella gave a characterization and a classification of few non-very generic arrangements in low dimensional space. In particular, they classified the combinatorics of $\B(6,3,\A^0)$ over commutative field of characteristic 0.
More recently, Settepanella and the author \cite{SY}, \cite{SY2} provided a geometric and algebraic conditions for arrangement $\A^0$ to be non-very generic, and some examples of non-very generic arrangements. 

In general, it would be complicated to consider intersections $\bigcap_{i=1}^r D_{S_i}$, $\lvert S_i  \rvert > k$ with $\bigcap_{i \in I} D_{L_i} \neq D_s$, where $D_S = \bigcap_{L \subset S, \lvert L \rvert = k+1} D_L$, $D_L \in \B(n,k,\A^0)$ because there would be a lot of case separations on the cardinality of $S_i$. As a first step in \cite{SY} and \cite{SY2} they introduced a \textit{simple} intersection which we call $r$-simple for simplicity.
\begin{defi}\label{def:r-simple}
    An element $X$ in the intersection lattice of the discriminantal arrangement $\B(n,k,\A^0)$ is called an \textit{$r$-simple} if 
    \begin{equation*}
        X = \bigcap_{i=1}^r D_{L_i}, \lvert L_i \rvert = k + 1, 
    \end{equation*}
    and for any $I \varsubsetneq [r]$, $\lvert I \rvert \geq 2$ and any $S \subset [n]$, $\lvert S \rvert > k + 1$,
    it follows that 
    \begin{equation*}
        \bigcap_{i \in I} D_{L_i} \neq D_s, 
    \end{equation*}
    where $D_S = \bigcap_{L \subset S, \lvert L \rvert = k+1} D_L, D_L \in \B(n,k,\A^0)$.
\end{defi}
Though the $r$-simplicity is a central notion, there are no examples of arrangements giving rise to simple and non-simple intersections in the literatures. 
Let us see some examples of arrangements giving rise to 5-simple and non 5-simple intersections. 
\begin{ex}[5-simple intersection]\label{ex:5simple}
    Let $L_1 = \{ 1,2,3,4 \}, L_2 = \{ 1,5,6,7 \}, L_3 = \{ 2,5,8,9 \}, L_4 = \{ 3,6,8,10 \}$ and $L_5 = \{ 4,7,9,10 \}$ be subsets of $[10]$. Let $\A^0$ be a generic arrangement of 10 hyperplanes (planes) in $\CC^3$ and $\A^{t}$ be its translated one as in Figure \ref{fig:simplenonve}. The intersection $X = \bigcap_{i=1}^5 D_{L_i}$ consists of all translations of hyperplanes $H_i^0$ in such a way that $\bigcap_{j=1}^4 D_{L_{i_j}} = \bigcap_{i=1}^5 D_{L_i}$. It also satisfies that for any $I \varsubsetneq [5]$, $\lvert I \rvert \geq 2$ and any $S \subset [10]$, $\lvert S \rvert > 4$, $\bigcap_{i \in I} D_{L_i} \neq D_s$, where $D_S = \bigcap_{L \subset S, \lvert L \rvert = 4} D_L$, $D_L \in \B(10,3,\A^0)$. Thus, $X$ is a 5-simple intersection. \\
    Notice that since there is a correspondence
    \begin{equation*}
        \A^t \in D_{L_i} \Leftrightarrow P_i^t = \bigcap_{p \in L_i} H_p^{t_p} \neq \emptyset,
    \end{equation*}
    the relation is equivalent to saying that if 
    \begin{equation*}
        P_{i_j}^t = \bigcap_{p \in L_{i_j}} H_p^{t_p} \neq \emptyset, \quad j=1,2,3,4,   
    \end{equation*}
    then 
    \begin{equation*}
        P_{i_5}^t = \bigcap_{p \in L_{i_5}} H_p^{t_p} \neq \emptyset
    \end{equation*}
    for any $i_1, i_2, i_3, i_4, i_5 \in [5]$. Each $P_{i_j}^t$ is an intersection point of exactly four hyperplanes $H_p^{t_p}$, $p \in L_{i_j}$ (see Figure \ref{fig:simplenonve}). 
    \begin{figure}[h]
        \centering
        \begin{tikzpicture}
            \coordinate (0) at (-1, 0);
            \coordinate (1) at (-1/2, -5/12);
            \coordinate (2) at (-1/4,-1);
            \coordinate (3) at (1/3,-5/6);
            \coordinate (4) at (5/3,-5/6);
            \coordinate (5) at (22/10, -4/5);
            \coordinate (6) at (27/10, -7/12);
            \coordinate (7) at (3,0);
            \coordinate (8) at (7/2,17/8);
            \coordinate (9) at (7/2,5/2);
            \coordinate (10) at (7/2, 35/12);
            \coordinate (11) at (33/10, 13/4);
            \coordinate (12) at (3/2, 35/8);
            \coordinate (13) at (12/10, 24/5);
            \coordinate (14) at (8/10, 24/5);
            \coordinate (15) at (1/2, 35/8);
            \coordinate (16) at (-3/2,15/4);
            \coordinate (17) at (-3/2,35/12);
            \coordinate (18) at (-3/2, 5/2);
            \coordinate (19) at (-3/2,17/8);

            \coordinate [label=left:$P_1^t$] (a) at (-0.05,0.25);
            \coordinate [label=right:$P_2^t$] (b) at (2.05,0.25);
            \coordinate [label=$P_3^t$] (c) at (2.9,5/2);
            \coordinate [label=right:$P_4^t$] (d) at (1,4);
            \coordinate [label=$P_5^t$] (e) at (-1,5/2);
            
            \coordinate (A) at (0,0);
            \coordinate (B) at (2,0);
            \coordinate (C) at (3,5/2);
            \coordinate (D) at (1,4);
            \coordinate (E) at (-1,5/2);

            \coordinate [label=$H_1^t$] (p) at (-1.3,-0.3);
            \coordinate [label=$H_2^t$] (q) at (-2/3-0.1,-0.95);
            \coordinate [label=$H_3^t$] (r) at (-1/4-0.1,-1.6);
            \coordinate [label=$H_4^t$] (s) at (0.45, -1.4);
            \coordinate [label=$H_5^t$] (t) at (1.7, -1.4);
            \coordinate [label=$H_6^t$] (u) at (2.3, -1.4);
            \coordinate [label=$H_7^t$] (v) at (2.9, -1.1);
            \coordinate [label=$H_8^t$] (w) at (3.8, 1.6);
            \coordinate [label=$H_9^t$] (w) at (3.8, 2.2);
            \coordinate [label=$H_{10}^t$] (t) at (1.85, 4.15);
            \coordinate (x) at (4.8, 2.2);
            \coordinate (y) at (2.7, 4.3);
            
            \begin{scope}
                \draw (0) -- (7);
                \draw (1) -- (10);
                \draw (2) -- (13);
                \draw (3) -- (16);
                \draw (4) -- (11);
                \draw (5) -- (14);
                \draw (6) -- (17);
                \draw (8) -- (15);
                \draw (9) -- (18);
                \draw (12) -- (19);
                
                \fill (A) circle (1.5pt);
                \fill (B) circle (1.5pt);
                \fill (C) circle (1.5pt);
                \fill (D) circle (1.5pt);
                \fill (E) circle (1.5pt);
            \end{scope}
        \end{tikzpicture}
        \caption{An arrangement $\A^t \in X = \bigcap_{i=1}^5 D_{L_i}$.}\label{fig:simplenonve}
    \end{figure}
\end{ex}

\begin{ex}[Non 5-simple intersection]
    Let $L_1 = \{ 1,2,3 \}, L_2 = \{ 1,2,5 \}, L_3 = \{ 1,4,7 \}$, $L_4 = \{ 3,6,7 \}$ and $L_5 = \{ 4,5,6 \}$ be subsets of $[7]$. Let $\A^0$ be a generic arrangement of $7$ hyperplanes (lines) in $\CC^2$ and let $\A^t$ be its translated one, as shown in Figure \ref{fig:nonsimplenonv}. Since there exists an intersection $X = D_{L_1} \cap D_{L_2} = D_{\{ 1,2,3,5 \}}$, the element $X = \bigcap_{i=1}^5 D_{L_i}$ is not a 5-simple intersection. Notice that the ``multiple intersections'' $P_1^t = P_2^t$ are intersections of not three but four hyperplanes, while $P_3^t$, $P_4^t$, $P_5^t$ are intersections of exactly three hyperplanes.
    \begin{figure}[h]
        \centering
        \begin{tikzpicture}
            \coordinate [label=above:${P_1^t=P_2^t}$] (0) at (0,0);
            \coordinate [label=above:$P_3^t$] (1) at (2,1);
            \coordinate [label=right:$P_4^t$] (2) at (0,3);
            \coordinate [label=above:$P_5^t$] (23) at (1,0);
            
            \coordinate (3) at (-1,0);    
            \coordinate (4) at (4,0);
            
            \coordinate (5) at (-1,-1/2);
            \coordinate (6) at (4,2);
            
            \coordinate (7) at (-1/2,-1);
            \coordinate (8) at (2,4);
            
            \coordinate (9) at (0,-1.5);
            \coordinate (10) at (0,4);
            
            \coordinate (11) at (4/3,-1);
            \coordinate (12) at (-1/4,15/4);
            
            \coordinate (13) at (4,-1);
            \coordinate (14) at (-1/3,10/3);
            
            \coordinate (15) at (-1/4,-5/4);
            \coordinate (16) at (4,3);
            
            \coordinate [label=$H_5^{t_5}$] (e) at (-1.3,-0.3);
            \coordinate [label=left:$H_1^{t_1}$] (f) at (-0.9,-1/2);
            \coordinate [label=below:$H_2^{t_2}$] (g) at (-1/2-0.1,-1+0.1);
            \coordinate [label=below:$H_3^{t_3}$] (g) at (0,-1.4);
            \coordinate [label=below:$H_6^{t_6}$] (h) at (1.4,-0.9);
            \coordinate [label=below:$H_7^{t_7}$] (j) at (4.2,-0.9);
            \coordinate [label=right:$H_4^{t_4}$] (i) at (3.9,3+0.2);
            
            \begin{scope}
                \draw (3) -- (4);
                \draw (5) -- (6);
                \draw (7) -- (8);
                \draw (9) -- (10);
                \draw (11) -- (12);
                \draw (13) -- (14);
                \draw (15) -- (16);
                
                \fill (0) circle (1.5pt);
                \fill (1) circle (1.5pt);
                \fill (2) circle (1.5pt);
                \fill (23) circle (1.5pt);
            \end{scope}
        \end{tikzpicture}
        \caption{An arrangement $\A^t \in X = \bigcap_{i=1}^5 D_{L_i}$.}\label{fig:nonsimplenonv}
    \end{figure}
\end{ex}
In the rest of this paper we will focus on non-very generic arrangements such that $X=\bigcap_{i=1}^r D_{L_i}$ is an $r$-simple intersection for simplicity (so we have exactly $r$ intersection points $P_i^t$, $i = 1, \dots, r$ which are intersections of $k+1$ hyperplanes indexed in $L_i$, $i = 1, \dots, r$ in the translated arrangement $\A^t \in \bigcap_{i=1}^r D_{L_i}$). 

We call the number $r$ the \textit{multiplicity} of $X$. If $\A^0$ is very generic and satisfies the condition (\ref{condi:ineq}), then it follows that the subspaces $D_{L_i}$, $i = 1, \dots, r$ intersect transversely (see Theorem 3.1 in \cite{Atha}). The fact is equivalent to saying that since $\text{rank} D_{L_i} = 1$,
\begin{equation}
    \text{rank} \bigcap_{i=1}^r D_{L_i} = \sum_{i=1}^r (\lvert L_i \rvert - k) = r.
\end{equation}
Thus, if the intersection lattice of the discriminantal arrangement $\B(n, k, \A^0)$ contains an $r$-simple intersection of rank strictly lower than $r$, that is a multiplicity of $X$, then $\A^0$ is non-very generic (see \cite{SY} for details). 
\subsection{A sufficient condition for non-very genericity}\label{subsec:r-simple}
Following \cite{SY} and \cite{SY2} let us recall a sufficient condition for arrangement $\A^0$ to be non-very generic in this subsection. 

For a fixed set $\TT = \{ L_1, \dots, L_r \}$ of subsets $L_i \subset [n]$, $\lvert L_i \rvert = k+1$ and any translated arrangement $\A^t = \{H_1^{t_1}, \dots, H_n^{t_n}\}$ of $\A^0$ we denote $P_i^t = \bigcap_{p \in L_i} H_p^{t_p}$ and $H_{i,j} = \bigcap_{p \in L_i \cap L_j} H_p^{t_p}$. Notice that $P_i^t$ is a point if and only if $\A^t \in D_{L_i}$; it is empty otherwise. 

A non-very generic arrangement $\A^0$ holds the property that if $\A^t \in \bigcap_{j=1}^{r-1} D_{L_{i_j}}$, then $\A^t \in \bigcap_{i=1}^{r} D_{L_{i}}$. In other words, if we translate hyperplanes of $\A^0$ in such a way that $r-1$ intersection points $P_{i_j}^t$, $j=1, \dots, r-1$ appear, then the $r$-th intersection point $P_{i_r}^t$ also appears automatically. To realize such property Settepanella and the author \cite{SY} introduced the following definitions.

\begin{defi}[$r$-set, \cite{SY}]\label{def:r-set}
    If $\TT = \{ L_1, \dots, L_r \}$ satisfies the conditions
    \begin{equation}\label{eq:proper1}
        \bigcup_{i=1}^r L_i = \bigcup_{i \in I \subset [r], \mid I \mid=r-1} L_i  \quad \mbox{ and} \quad L_i \cap L_j \neq \emptyset
    \end{equation}
    for any subset $I \subset [r], \lvert I \rvert = r-1$ and any two indices $1 \leq i < j \leq r$, we call the set $\TT$ an $r$-\textit{set}.
\end{defi}

\begin{defi}($K_\TT$-translated, $K_\TT$-configuration, \cite{SY})
    A translation $\A^t = \{ H_1^{t_1}, \dots, H_n^{t_n} \}$ of $\A^0$ is called \textit{$K_\TT$-translated} if $P_i^{t} = \bigcap_{p \in L_i} H_p^{t_p} \neq \emptyset$ is the intersection of exactly $k+1$ hyperplanes indexed in $L_i$ for any $L_i \in \TT$. \\
    For a $K_\TT$-translation $\A^t$ we call the complete graph having the points $P_i^t$, $i=1, \dots, r$ as vertices and vectors $P_i^tP_j^t \in \bigcap_{p \in L_i \cap L_j} H_p^t$, $1 \leq i<j \leq r$ as edges \textit{$K_\TT$-configuration} and denote by $K_\TT(\A^0)$.
\end{defi}
In \cite{SY2}, a linear condition for non-very genericity was provided in terms of the above definitions. Let us briefly trace the sketch.

Consider essentialization of the discriminantal arrangement ess$(\B(n,k,\A^0))$ in $\CC^{n-k} \simeq \SS / D_{[n]}$. Then, $\A^t \in$ ess$(\B(n,k,\A^0))$ uniquely corresponds to a translation $t \in \CC^n / C \simeq \CC^{n-k}$, where $C = \{ t \in \CC^n \mid \A^t \ \text{is central} \}$ and the following proposition holds.
\begin{prop}[Proposition 3.1, \cite{SY2}]
    Let $\A^0$ be a generic arrangement of $n$ hyperplanes in $\CC^k$. Translations $\A^{t_1}, \dots, \A^{t_d}$ are linearly independent vectors in $\SS/D_{[n]} \simeq \CC^{n-k}$ if and only if $t_1, \dots, t_d$ are linearly independent vectors in $\CC^n/C \simeq \CC^{n-k}$.
\end{prop}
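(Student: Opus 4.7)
The plan is to exhibit the coordinate identification $\SS(\A^0) \simeq \CC^n$ as a linear isomorphism of $\CC$-vector spaces that carries the subspace $D_{[n]}$ onto $C$, so that it descends to a linear isomorphism $\SS/D_{[n]} \simeq \CC^n/C$ on quotients. Since linear independence is both preserved and reflected by any linear isomorphism, the asserted equivalence will then follow at once.

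I would first note that the identification $\SS \to \CC^n$ sending $\A^t = (H_1^{t_1}, \dots, H_n^{t_n})$ to $(t_1, \dots, t_n)$ is $\CC$-linear by construction: $\A^0$ is placed at the origin and each $t_i$ is the translation parameter of the linear hyperplane $H_i^0$ along its fixed normal, so scaling and addition of translation data commute with the map on both sides. Consequently this isomorphism already takes the family $\A^{t_1}, \dots, \A^{t_d}$ to the vectors $t_1, \dots, t_d$ and preserves linear independence. What remains is to show that it carries $D_{[n]}$ onto $C$.

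The identification $D_{[n]} = C$ is the only substantive step. The inclusion $C \subseteq D_{[n]}$ is immediate: if $\A^t$ is central with common point $p$, then every $(k+1)$-subfamily of $\A^t$ meets at $p$, so $\A^t$ lies in every $D_L$ with $\lvert L \rvert = k+1$ and hence in $D_{[n]} = \bigcap_L D_L$. For the reverse inclusion, take $\A^t \in D_{[n]}$; by genericity of $\A^0$, any $k$ of the translated hyperplanes already meet in a unique point $p$, and membership in $D_L$ for every $(k+1)$-set $L$ extending those $k$ indices forces each remaining hyperplane to pass through $p$, so $\A^t$ is central. Since the $H_i^0$ are linear, $C$ is visibly a $k$-dimensional linear subspace of $\CC^n$, parametrized by the common point $p \in \CC^k$ via a linear injection whose injectivity follows from the normals $\alpha_i$ spanning $\CC^k$. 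Hence both quotients are $(n-k)$-dimensional, the induced map $\SS/D_{[n]} \to \CC^n/C$ is the desired linear isomorphism, and the only genuine obstacle is the genericity step in the reverse inclusion.
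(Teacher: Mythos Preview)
The paper does not actually prove this proposition; it is quoted verbatim from \cite{SY2} and used as input for later results. There is therefore no proof in the present paper to compare against.

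Your argument is correct and is the natural one. The identification $\SS\to\CC^n$, $\A^t\mapsto t$, is linear once $\A^0$ is placed at the origin, and the only point requiring care is the equality $D_{[n]}=C$. Your reverse inclusion is fine: fixing $k$ indices, genericity gives a unique intersection point $p$, and membership in $D_L$ for each $(k{+}1)$-set $L$ extending those indices forces every remaining hyperplane through $p$. The forward inclusion is immediate. The observation that $C$ is a $k$-dimensional linear subspace of $\CC^n$, parametrized linearly and injectively by the common point $p$ (injectivity because the $\alpha_i$ span $\CC^k$), completes the dimension count and yields the linear isomorphism on quotients. Nothing is missing.
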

Let $\A^t$ be $K_\TT$-translated and $P_i^t$ be the intersection $\bigcap_{p \in L_i} H_p^{t_p}$. Then, we can consider a unique family $\{ v_{i,j}^t \}$ of vectors $v_{i,j}^t \in \bigcap_{p \in L_i \cap L_j} H_p^t$ such that $P_i^t + v_{i,j}^t = P_j^t$, and We have the following definition.
\begin{defi}[Definition 3.1, \cite{SY2}]
    Let $\TT$ be an $r$-set and $\A^t$ be $K_\TT$-translated of generic arrangement $\A^0$. Fix a number $i_0 \in [r]$. We call the set of vectors $\{ v_{i_0,j}^t \}_{j \neq i_0}$ satisfying $P_{i_0}^t + v_{i_0,j}^t = P_j^t$ for any $j(\neq i_0) \in [r]$ the \textit{$K_\TT$-vector sets}.
\end{defi}

\begin{rem}
    Since we have $v_{i,j}^t = v_{1,j}^t - v_{1,i}^t$ by definition, the set $\{ v_{i_0,j}^t \}_{j \neq i_0}$ is determined by its subset $\{ v_{1,i}^t \}_{i \neq 1}$. We consider the set $\{ v_{1,i}^t \}_{i \neq 1}$ instead of $\{ v_{i,j}^t \}_{j \neq i}$ in the rest of this paper.
\end{rem}
\begin{center}
    \begin{figure}[h]
        \begin{tikzpicture}
            \coordinate [label=above:$P_i^t$] (0) at (0,{3*2/3});
            \coordinate [label=left:$P_{i+1}^t$] (1) at ({-3/sqrt(2)*2/3},{3/sqrt(2)*2/3});
            \coordinate [label=left:$P_{i+2}^t$] (2) at ({-3*2/3},0);
            \coordinate [label=below:$\dots$] (3) at ({-3*2/3},{-1*2/3});
            \coordinate [label=left:$P_{j-1}^t$] (4) at ({-3/sqrt(2)*2/3},{-3/sqrt(2)*2/3});
            \coordinate [label=below:$P_j^t$] (5) at (0,{-3*2/3});
            \coordinate [label=right:$P_{j+1}^t$] (6) at ({3/sqrt(2)*2/3},{-3/sqrt(2)*2/3});
            \coordinate [label=below:$\dots$] (7) at ({3*2/3},{-1*2/3});
            \coordinate [label=right:$P_{i-2}^t$] (8) at (3*2/3,0);
            \coordinate [label=right:$P_{i-1}^t$] (9) at ({3/sqrt(2)*2/3},{3/sqrt(2)*2/3});
            
            \begin{scope}
                \draw[-latex] (0) -- node[above] {$v_{i,i+1}^t$} (1);
                \draw[-latex] (0) -- node[] {$v_{i,i+2}^t$} (2);
                \draw[-latex] (0) -- node[] {$v_{i,j-1}^t$} (4);
                \draw[-latex] (0) -- node[] {$v_{i,j+1}^t$} (6);
                \draw[-latex] (0) -- node[] {$v_{i,i-2}^t$} (8);
                \draw[-latex] (1) -- node[left] {$v_{i+1,i+2}^t$} (2);
                \draw[-latex] (4) -- node[below] {$v_{j-1,j}^t$} (5);
                \draw[-latex] (5) -- node[below] {$v_{j,j+1}^t$} (6);
                \draw[-latex] (8) -- node[right] {$v_{i-2,i-1}^t$} (9);
                \draw[-latex] (0) -- node[above] {$v_{i,i-1}^t$} (9);
                \draw[-latex] (0) -- node[] {$v_{i,j}^t$} (5);
                
                \fill (0) circle (1.5pt);
                \fill (1) circle (1.5pt);
                \fill (2) circle (1.5pt);
                \fill (4) circle (1.5pt);
                \fill (5) circle (1.5pt);
                \fill (6) circle (1.5pt);
                \fill (8) circle (1.5pt);
                \fill (9) circle (1.5pt);

            \end{scope}
        \end{tikzpicture} \ \ \ 
        \begin{tikzpicture}
            \coordinate [label=above:$P_i^t$] (0) at (0,3*2/3);
            \coordinate [label=above:$P_{i+1}^t$] (1) at ({-3/sqrt(2)*2/3},{3/sqrt(2)*2/3});
            \coordinate [label=left:$P_{i+2}^t$] (2) at (-3*2/3,0);
            \coordinate [label=below:$\dots$] (3) at (-3*2/3,-1*2/3);
            \coordinate [label=left:$P_{j-1}^t$] (4) at ({-3/sqrt(2)*2/3},{-3/sqrt(2)*2/3});
            \coordinate [label=below:$P_j^t$] (5) at (0,-3*2/3);
            \coordinate [label=right:$P_{j+1}^t$] (6) at ({3/sqrt(2)*2/3},{-3/sqrt(2)*2/3});
            \coordinate [label=below:$\dots$] (7) at (3*2/3,-1*2/3);
            \coordinate [label=right:$P_{i-2}^t$] (8) at (3*2/3,0);
            \coordinate [label=above:$P_{i-1}^t$] (9) at ({3/sqrt(2)*2/3},{3/sqrt(2)*2/3});
            
            \begin{scope}
                \draw[-latex] (0) -- node[above] {$v_{i,i+1}^t$} (1);
                \draw[-latex] (0) -- node[] {$v_{i,i+2}^t$} (2);
                \draw[-latex] (0) -- node[] {$v_{i,j-1}^t$} (4);
                \draw[-latex] (0) -- node[] {$v_{i,j+1}^t$} (6);
                \draw[-latex] (0) -- node[] {$v_{i,i-2}^t$} (8);
                \draw[-latex] (0) -- node[above] {$v_{i,i-1}^t$} (9);
                \draw[-latex] (0) -- node[] {$v_{i,j}^t$} (5);
                
                \fill (0) circle (1.5pt);
                \fill (1) circle (1.5pt);
                \fill (2) circle (1.5pt);
                \fill (4) circle (1.5pt);
                \fill (5) circle (1.5pt);
                \fill (6) circle (1.5pt);
                \fill (8) circle (1.5pt);
                \fill (9) circle (1.5pt);
            \end{scope}
        \end{tikzpicture}
        \caption{$K_\TT$-configuration $K_\TT(\A^0)$ and its associated $K_\TT$-vector set.}\label{fig:K_T_vect}
    \end{figure}
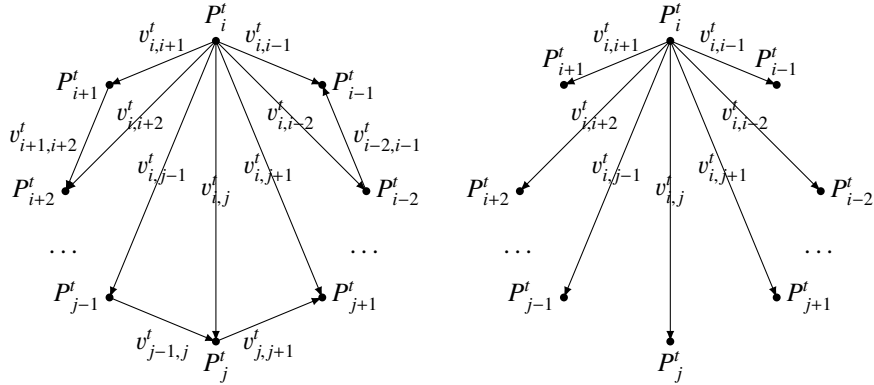
\end{center}
For (a) given $K_\TT$-vector set(s) we define two operations as follows.
\begin{align*}
    \{ v_{1,i}^{t} \}_{i = 2, \dots, r} + \{ v_{1,i}^{t'} \}_{i = 2, \dots, r} & \coloneqq \{ v_{1,i}^{t} + v_{1,i}^{t'} \}_{i = 2, \dots, r} \quad \quad (sum),    \\
    a \{ v_{1,i}^{t} \}_{i = 2, \dots, r}                                      & \coloneqq \{ a v_{1,i}^{t} \}_{i = 2, \dots, r}, a \in \CC \quad (multiplication).
\end{align*}
With above notations and operations, we have the following definition. 
\begin{defi}[Definition 4.2 \cite{SY2}]
    For a fixed $r$-set $\TT$ we call the $d$ different $K_\TT$-vector sets $\{ v_{1,i}^{t} \}_{i = 2, \dots, r}$, $t = 1, \dots, d$ \textit{linearly independent} if for any $a_1, \ldots, a_d \in \CC$ such that
    \begin{equation}
        \sum_{t=1}^{d} a_h \{ v^{t}_{1,i} \}_{i = 2, \dots, r} = 0,
    \end{equation}
    then $a_1=\ldots=a_d=0$. 
\end{defi}
\noindent With the notion of independent $K_\TT$-vector sets the criterion for non-very genericity is provided in \cite{SY2}. The following theorem is a basic result for the rest of this paper.
\begin{thm}[Theorem 4.5 \cite{SY2}]\label{thm:main2}
    Let $\A^0$ be a generic arrangement of $n$ hyperplanes in $\CC^k$. If there exists an $r$-set $\TT=\{L_1, \ldots, L_r\}$ with $\lvert \bigcup_{i=1}^r L_i \rvert = m$ and rank $\bigcap_{p \in \bigcap_{i=1}^r L_i} H_p^0=y$, which admits $m-y-k-r'$ independent $K_{\TT}$-vector sets for some $r'<r$, then $\A^0$ is non-very generic. 
\end{thm}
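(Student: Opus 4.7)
The plan is to carry out a dimension count showing that $\dim_{\CC}\bigl(\bigcap_{i=1}^r D_{L_i}\bigr)$ in $\SS(\A^0)\simeq \CC^n$ strictly exceeds the value $n-r$ predicted in the very generic case. Indeed, for an $r$-simple intersection whose defining $r$-set $\TT$ satisfies the Bayer--Brandt inequality (\ref{condi:ineq}), Athanasiadis's theorem identifies the intersection lattice of $\B(n,k,\A^0)$ for $\A^0 \in \Zt$ with $P(n,k)$ and yields $\mathrm{rank}\bigl(\bigcap D_{L_i}\bigr) = r$, equivalently $\dim\bigl(\bigcap D_{L_i}\bigr) = n-r$. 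Hence any strict excess forces $\A^0$ out of $\Zt$.

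To produce such an excess, introduce the linear map
\[
\Phi : \bigcap_{i=1}^r D_{L_i} \longrightarrow \{K_\TT\text{-vector sets}\}, \qquad \Phi(\A^t) = \{v_{1,i}^{t}\}_{i=2,\ldots,r},
\]
well defined on the Zariski dense subset of $K_\TT$-translations (dense because $X$ is $r$-simple). By the linear reformulation recalled in Subsection~\ref{subsec:r-simple}, we may read all dimensions at the level of $t \in \CC^n / C$, so the question is purely linear algebraic. The hypothesis gives $\dim \mathrm{Im}(\Phi) \geq d := m - y - k - r'$. The kernel of $\Phi$ is the stratum on which $P_1^t = \cdots = P_r^t =: P$ for some $P \in \CC^k$, i.e., on which every hyperplane indexed by $\bigcup_i L_i$ passes through a single point $P$. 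Parametrizing this stratum by the free choice of $P \in \CC^k$ (which fixes $t_p$ for each $p \in \bigcup_i L_i$) together with the free translations $t_p$ for $p \notin \bigcup_i L_i$, one finds $\dim \mathrm{Ker}(\Phi) = k + (n - m)$. Combining these two bounds yields
\[
\dim \bigcap_{i=1}^r D_{L_i} \;\geq\; \dim\mathrm{Im}(\Phi) + \dim\mathrm{Ker}(\Phi) \;\geq\; (m - y - k - r') + (k + n - m) \;=\; n - y - r',
\]
which, since $r' < r$, strictly exceeds $n-r$ after absorbing the $y$ contribution into the observation that the $y$ hyperplanes indexed by $\bigcap_i L_i$ pass through $P$ automatically (so they add no codimension to the kernel locus). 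Hence $\A^0 \notin \Zt$, i.e., is non-very generic.

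\textbf{Main obstacle.} The delicate step is the kernel dimension count in the presence of a nontrivial common index set $\bigcap_i L_i$ (i.e., $y > 0$). One must verify that the $y$ hyperplanes that automatically contain every $P_i^t$ impose no further constraint on the kernel stratum, so that the point $P$ genuinely ranges over all of $\CC^k$. Matching the resulting lower bound $n - y - r'$ against the very generic value $n - r$ under the hypothesis $r' < r$ requires careful bookkeeping of the free parameters of $\SS(\A^0)$ against the incidences forced by $\bigcup_i L_i$, and is the main technical content of the proof.
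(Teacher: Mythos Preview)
This paper does not contain a proof of the statement: Theorem~\ref{thm:main2} is quoted verbatim from \cite{SY2} (as ``Theorem 4.5'') and is used as a black box throughout Sections~\ref{sec:ex}--\ref{sec:good_rs}. There is therefore no proof here to compare your proposal against; any comparison would have to be made with the argument in \cite{SY2}.

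On the substance of your sketch: the linear map $\Phi:\bigcap_i D_{L_i}\to\{K_\TT\text{-vector sets}\}$ is well defined and linear (each $P_i^t$ depends linearly on $t$ since $\A^0$ is central), and your kernel computation $\dim\mathrm{Ker}(\Phi)=k+(n-m)$ is correct. The difficulty you flag is real and is not resolved by the sentence ``absorbing the $y$ contribution.'' Your rank--nullity estimate yields
\[
\dim\Bigl(\bigcap_{i=1}^r D_{L_i}\Bigr)\ \ge\ (m-y-k-r')+(k+n-m)\ =\ n-y-r',
\]
and to conclude non-very genericity you compare this with the very generic value $n-r$. That comparison requires $r>r'+y$, whereas the hypothesis gives only $r'<r$. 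When $y=0$ (the case that dominates the applications in this paper, cf.\ Proposition~\ref{prop:main} and Proposition~\ref{lem:mainprop}, where one takes $y=0$) your argument goes through cleanly. For $y>0$ you have not closed the gap: the hyperplanes indexed by $\bigcap_i L_i$ already pass through every $P_i^t$, but this does not enlarge the kernel of $\Phi$, because the kernel parametrization by $(P,\{t_p\}_{p\notin\bigcup_iL_i})$ already accounts for all $m$ hyperplanes passing through $P$. So the ``absorption'' you invoke does not buy an extra $y$ dimensions in $\mathrm{Ker}(\Phi)$, and the inequality $n-y-r'>n-r$ remains unproved from your count alone. To complete the argument in the $y>0$ regime you would need either a sharper lower bound on $\dim\mathrm{Im}(\Phi)$ or an independent argument that the very generic rank of $\bigcap_i D_{L_i}$ drops by at least $y$ in the presence of a common index set (equivalently, that the $r$-set with $\bigcap_i L_i\neq\emptyset$ already fails the Bayer--Brandt inequality (\ref{condi:ineq}) at the level needed for transversality); neither of these is supplied.
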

\begin{rem}\label{rem:vec_set}
    According to Theorem \ref{thm:main2}, if we find a certain number, say $d \in \ZZ_{\geq 1}$ independent $K_\TT$-vector sets, they give rise to a non-very generic arrangement. Indeed we can define hyperplanes $H_l^0 \in \A^0 = \{ H_i^0 \}_{i = 1, \dots, n}$, $l \in L_i \cap L_j$ by $v_{i,j}^t \in H_l^0$, $t \in [d]$.
\end{rem}
\noindent We close this section by giving a notation we will use throughout this paper.
\begin{nota}
    For vectors $v_1, \dots, v_m \in \CC^k$ we denote by $\left< v_{i_1}, \dots, v_{i_k} \right>$ a subspace spanned by $v_{i_1}, \dots, v_{i_k}$. Notice that the vectors $v_{i_1}, \dots,v_{i_k}$ need not necessarily be independent in this notation.
\end{nota}
\section{Motivating examples}\label{sec:ex}
Let us begin with Crapo's example.
\begin{ex}[Crapo's example \cite{Crapo}]\label{ex:MS(6,2)}
    Let $\TT = \{ L_1, L_2, L_3, L_4 \}$ be a 4-set defined by $L_1 = \{ 1,2,3 \}, L_2 = \{ 1,4,5 \}, L_3 = \{ 2,4,6 \}, L_4 = \{ 3,5,6 \}$. Consider an arrangement $\A^0 = \{ H_i^0 \}_{i = 1, \dots, 6}$ of lines in $\CC^2$ which admits a $K_\TT$-translation $\A^t$ as Figure \ref{fig:crapo}.  
    \begin{figure}[h]
        \begin{center}
            \begin{tikzpicture}
                \coordinate (0) at (-1,0);
                \coordinate (1) at (-1/2,-2/3);
                \coordinate (2) at (-1/3,-1);
                \coordinate (3) at (17/4,-1);
                \coordinate (4) at (9/2,-1/2);
                \coordinate (5) at (5,0);
                \coordinate (6) at (4,9/2);
                \coordinate (7) at (4,16/3);
                \coordinate (8) at (11/4,5);
                \coordinate (9) at (4/3,4);
                \coordinate (10) at (0,4);
                \coordinate (12) at (0,5/2);
                \coordinate (p1) at (0,0);
                \coordinate (p2) at (4,0);
                \coordinate (p3) at (3,4);
                \coordinate (p4) at (1,3);
                
                \coordinate [label=$H_1^{t_1}$] (H1) at (-1.3,-0.3);
                \coordinate [label=$H_2^{t_2}$] (H2) at (-0.8,-1.0);
                \coordinate [label=$H_3^{t_3}$] (H3) at (-0.3,-1.5);
                \coordinate [label=$H_4^{t_4}$] (H4) at (4.3,-1.5);
                \coordinate [label=$H_5^{t_5}$] (H5) at (4.8,-0.9);
                \coordinate [label=$H_6^{t_6}$] (H6) at (4.3,4.3);
                \coordinate [label=$P_1^t$] (P1) at (-0.2,0);
                \coordinate [label=$P_2^t$] (P2) at (4.2,0);
                \coordinate [label=$P_3^t$] (P3) at (2.7,3.9);
                \coordinate [label=$P_4^t$] (P4) at (0.9,3.1);
                \coordinate [label=$v_{1,3}^t$] () at (1.3,0.8);
                \coordinate [label=$v_{2,4}^t$] () at (2.5,0.8);
                \coordinate (A) at (-0.2,-0.1);
                
                \begin{scope}
                    \draw (0) -- node[below] {$v_{1,2}^t$} (5);
                    \draw (1) -- node[] {} (7);
                    \draw (3) -- node[right] {$v_{2,3}^t$} (8);
                    \draw (2) -- node[left] {$v_{1,4}^t$} (9);
                    \draw (4) -- node[] {} (10);
                    \draw (6) -- node[above] {$v_{3,4}^t$} (12);
                    \draw[-latex] (p1) -- (p2);
                    \draw[-latex] (p1) -- (p3);
                    \draw[-latex] (p1) -- (p4);
                    \draw[-latex] (p2) -- (p3);
                    \draw[-latex] (p2) -- (p4);
                    \draw[-latex] (p3) -- (p4);
                    
                    \fill (p1) circle (1.5pt);
                    \fill (p2) circle (1.5pt);
                    \fill (p3) circle (1.5pt);
                    \fill (p4) circle (1.5pt); 
                    
                \end{scope}
            \end{tikzpicture}\caption{Translation $\A^t$ consisting of four quadrilateral and two diagonal lines.}\label{fig:crapo}
        \end{center}
    \end{figure}
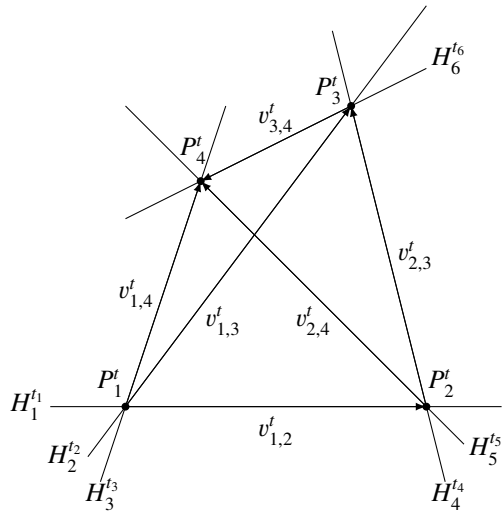 \\
    Let assume the arrangement $\A^0$ admits $d$ $K_\TT$-translations that is, we can choose $d$ linearly independent $K_\TT$-vector sets $\{ v_{1,2}^t, v_{1,3}^t, v_{1,4}^t \}$, $t = 1, \dots, d$. 
    
    For $1 \leq a<b \leq 4$ we denote by
    \begin{align*}
        V_{a,b} = \left< v_{a,b}^t \mid  t = 1, \dots, d \right>
    \end{align*}
    the vector space spanned by $v_{a,b}^t \in H_{a,b}^0 = \bigcap_{p \in L_a \cap L_b} H_p^0$.
    The following two claims hold.
    \begin{clm}\label{clm:1}
        The dimension of $V_{a,b}$ is one for any $a,b$.
    \end{clm}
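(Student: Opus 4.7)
The plan is to exploit two structural facts about Crapo's $4$-set: each pairwise intersection $L_a \cap L_b$ consists of a single index (a direct check shows $L_1 \cap L_2 = \{1\}$, $L_1 \cap L_3 = \{2\}$, $L_1 \cap L_4 = \{3\}$, $L_2 \cap L_3 = \{4\}$, $L_2 \cap L_4 = \{5\}$, $L_3 \cap L_4 = \{6\}$), and we work inside the ambient space $\CC^{2}$. Consequently, for each pair $1 \le a < b \le 4$, the subspace $H_{a,b}^0 = \bigcap_{p \in L_a \cap L_b} H_p^0$ is a single linear hyperplane of $\CC^{2}$, hence one-dimensional.

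First I would recall that although $v_{a,b}^t \in \bigcap_{p \in L_a \cap L_b} H_p^{t}$ sits in the translated hyperplane, the defining relation $P_a^t + v_{a,b}^t = P_b^t$ expresses $v_{a,b}^t$ as the difference of two points lying on the same parallel translate $H_p^{t_p}$ of $H_p^0$; therefore $v_{a,b}^t$ belongs to the linear subspace $H_{a,b}^0$. Consequently $V_{a,b} \subseteq H_{a,b}^0$ and so $\dim V_{a,b} \le 1$ for every choice of $a,b$.

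To upgrade this to equality, I would argue that at least one spanning vector $v_{a,b}^t$ is nonzero. This follows from the $K_\TT$-translated hypothesis on $\A^t$: the point $P_a^t$ is required to be the intersection of \emph{exactly} the $k+1=3$ hyperplanes indexed by $L_a$, and similarly for $P_b^t$. If we had $v_{a,b}^t = 0$, then $P_a^t = P_b^t$ would lie simultaneously on every hyperplane indexed by $L_a \cup L_b$, a set of strictly more than $3$ hyperplanes since $|L_a \cup L_b|=5$; this contradicts $P_a^t$ being the intersection of exactly three. Hence $v_{a,b}^t \ne 0$, and $\dim V_{a,b} = 1$.

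The reasoning is essentially immediate once one unpacks definitions, so there is no real obstacle; the only subtle point is the passage from the affine condition $v_{a,b}^t \in H_p^{t_p}$ to the linear containment $v_{a,b}^t \in H_p^0$, which must be stated carefully. I would also mention that this claim is what makes the subsequent analysis possible, since it lets one replace each family $\{v_{a,b}^t\}_{t}$ by a single generating direction, reducing the problem of linear independence of $K_\TT$-vector sets to a linear-algebra question along the edges of the graph $K_\TT(\A^0)$.
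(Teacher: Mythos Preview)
Your proof is correct and follows essentially the same approach as the paper: you bound $\dim V_{a,b}\le 1$ via the containment $V_{a,b}\subseteq H_{a,b}^0$ (the paper phrases this dually as $\alpha_p\in V_{a,b}^\perp$), and you obtain $\dim V_{a,b}\ge 1$ by arguing $v_{a,b}^t\neq 0$. Your justification for the latter, invoking the ``exactly $k+1$ hyperplanes'' clause of the $K_\TT$-translated hypothesis, is in fact more careful than the paper's brief remark.
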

    \begin{proof}
        Since for any $a,b$ there exists a vector $\alpha_p \in V_{a,b}^\perp$, $p \in L_a \cap L_b$, we have $\dim V_{a,b}^\perp \geq 1$ $\iff$ $\dim V_{a,b} \leq 1$. Since $v_{a,b}^t \neq 0$, and otherwise $H_{a,b}^0 = \{ 0 \}$, we also have $\dim V_{a,b} \neq 0$. Thus, $\dim V_{a,b} = 1$ for any $a,b$.
    \end{proof}
    \begin{clm}\label{clm:2}
        Let $\{ v_{1,2}^t, v_{1,3}^t, v_{1,4}^t \}$, $t = 1, \dots, d$ be $K_\TT$-vector sets. The sets are linearly independent if and only if $d = 1$.
    \end{clm}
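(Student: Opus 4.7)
The plan is to prove directly that any two $K_\TT$-vector sets must be scalar multiples of one another; combined with the existence of one non-zero $K_\TT$-vector set (the one coming from the assumed $K_\TT$-translation $\A^t$), this gives both directions of the equivalence. By Claim~\ref{clm:1} each space $V_{a,b}$ is one-dimensional, so I fix a non-zero generator $w_{a,b} \in V_{a,b}$ and write $v_{a,b}^t = \lambda_{a,b}^t w_{a,b}$ with $\lambda_{a,b}^t \in \CC$. In any $K_\TT$-translation the points $P_i^t$ are pairwise distinct (otherwise a coincidence $P_i^t = P_j^t$ would force the common point to lie on strictly more than $k+1$ hyperplanes, contradicting the definition of $K_\TT$-translated), so every $\lambda_{a,b}^t$ is non-zero.

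The key step is to feed these expressions into the triangle relations $v_{i,j}^t = v_{1,j}^t - v_{1,i}^t$ for $2 \le i < j \le 4$. For the triangle on $\{1,2,3\}$ this reads
\[
\lambda_{2,3}^t w_{2,3} = \lambda_{1,3}^t w_{1,3} - \lambda_{1,2}^t w_{1,2}.
\]
Genericity of $\A^0$ in $\CC^2$ guarantees that the six one-dimensional subspaces $V_{a,b}$ are pairwise distinct, so in particular $\{w_{1,2}, w_{1,3}\}$ is a basis of $\CC^2$, and $w_{2,3}$ has a unique expansion $w_{2,3} = \alpha w_{1,2} + \beta w_{1,3}$ with $\alpha, \beta \neq 0$. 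Comparing coefficients in the displayed identity then shows that both $\lambda_{1,2}^t / \lambda_{2,3}^t$ and $\lambda_{1,3}^t / \lambda_{2,3}^t$ are fixed non-zero constants independent of $t$, so the ratio $\lambda_{1,2}^t : \lambda_{1,3}^t$ does not depend on $t$. Applying the same argument to the triangles $\{1,2,4\}$ and $\{1,3,4\}$ extends this to the full ratio $\lambda_{1,2}^t : \lambda_{1,3}^t : \lambda_{1,4}^t$, so any two $K_\TT$-vector sets are proportional.

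The claimed equivalence then falls out at once: a single non-zero vector set is trivially linearly independent, so $d = 1$ yields independence, while for $d \ge 2$ the previous paragraph produces a non-trivial linear dependence, ruling out independence. I do not anticipate any real obstacle beyond ensuring that $\alpha, \beta$ are non-zero, which is exactly what genericity of $\A^0$ provides; everything else is elementary linear algebra in $\CC^2$.
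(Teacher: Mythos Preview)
Your proof is correct and follows essentially the same route as the paper: both use Claim~\ref{clm:1} to reduce each $v_{a,b}^t$ to a scalar multiple of a fixed generator, and then deduce that any two $K_\TT$-vector sets are proportional. The paper simply asserts that one scalar $k$ works for all pairs $(a,b)$ simultaneously (``otherwise $\dim V_{a,b}\ge 2$''), whereas you make this step explicit by invoking the triangle relations $v_{i,j}^t=v_{1,j}^t-v_{1,i}^t$ together with genericity to force $\lambda_{1,2}^t:\lambda_{1,3}^t:\lambda_{1,4}^t$ to be independent of $t$; this is exactly the justification that the paper's one-line argument leaves implicit.
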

    \begin{proof}
        If $d=1$, the set $\{ v_{1,2}^1, v_{1,3}^1, v_{1,4}^1 \}$ is obviously linearly independent. Let us show the converse. If $d > 1$, there should exist a scalar $k$ such that $v_{i,j}^t = k v_{i,j}^1$ for $t = 2, \dots, d$ and any $i, j$, and otherwise $\dim V_{a,b} \geq 2$ for some $a,b \in [4]$. Thus, in this case we have that $\{ v_{1,2}^t, v_{1,3}^t, v_{1,4}^t \} = k \{ v_{1,2}^1, v_{1,3}^1, v_{1,4}^1 \}$ for any $t = 2, \dots, d$; i.e., we have dependent $K_\TT$-vector sets. Thus, if $\{ v_{1,2}^t, v_{1,3}^t, v_{1,4}^t \}$, $t = 1, \dots, d$ are independent $K_\TT$-vector sets, then $d=1$.
    \end{proof}
    By Claim \ref{clm:2} it is sufficient to consider only the case $d=1$. Since each hyperplane (line) $H_p^0$, $p \in L_a \cap L_b$ contains a vector $v_{a,b}$, where $1 \leq a<b \leq 4$ and $\lvert L_a \cap L_b \rvert = 1$, it follows that
    \begin{equation}
        \left< \alpha_{l_{a.b}} \right> = \left( H_{l_{a,b}}^0 \right)^\perp = \sum_{p \in L_a \cap L_b} \left( H_p^0 \right)^\perp = V_{a,b}^\perp,
    \end{equation}
    where $\{ l_{a,b} \} = L_a \cap L_b$. \\
    Notice that the $K_\TT$-vector set $\{ v_{1,2}, v_{1,3}, v_{1,4} \}$ satisfies the condition 
    \begin{equation}\label{eq:1}
        \dim \sum_{a,b \in I} V_{a,b}^\perp = \lvert I \rvert \text{ for any $I \subset [4], 1  \leq \lvert I \rvert \leq 2$}, \ \text{and} \
        v_{i,k} \notin  \left< v_{i,j} \right> \text{for distinct $i,j,k$}.
    \end{equation}
    Conversely, let us consider a set of vectors $\{ v_{1,2}, v_{1,3}, v_{1,4} \}$ satisfying the condition (\ref{eq:1}).
    Then, we can choose generic vectors $\alpha_l$, $l = 1, \dots, 6$ from the orthogonal spaces $V_{a,b}^\perp$, and we obtain a generic arrangement $\A^0$. In particular, the arrangement admits a $K_\TT$-translation. That is we have intersection points $P_i^t = \bigcap_{p \in L_i} H_p^t$, $i=1,2,3,4$, which are intersections of exactly 3 hyperplanes indexed in $L_i$, $i=1,2,3,4$. Thus, we have a $K_\TT$-vector set $\{ v_{1,2}, v_{1,3}, v_{1,4} \}$. By Theorem \ref{thm:main2} it follows that the arrangement $\A^0$ constructed from the $K_\TT$-vector set is non-very generic.
\end{ex}
The following arrangement is constructed as a ``high-dimensional'' Crapo's example in \cite{SY2}.
\begin{ex}[$\B(12,8,\A^0)$, \cite{SY2}]\label{ex:MS(12,8)}
    Let $\TT = \{ L_1, L_2, L_3, L_4 \}$ be a 4-set defined by $L_j = [12] \setminus K_j$, where $K_1 = \{ 10,11,12 \}$, $K_2 = \{ 7,8,9 \}$, $K_3 = \{ 4,5,6 \}$, and $K_4 = \{ 1,2,3 \}$. Consider an arrangement $\A^0 = \{ H_i^0 \}_{i = 1, \dots, 12}$ of hyperplanes in $\CC^8$ which admits $d$ $(\geq 1)$ $K_\TT$-translations $\A^t$, $t = 1, \dots, d$ as in Figure \ref{fig:MS(12,8)}. Remark that unlike Example \ref{ex:MS(6,2)}, each line in the figure does not represent a hyperplane but a subspace $H_{i,j}^t = \bigcap_{p \in L_i \cap L_j} H_p^t$ since $\lvert L_i \cap L_j  \rvert = 6$ for all $1 \leq i<j \leq 4$.
    
    Let $\{ v_{1,2}^t, v_{1,3}^t, v_{1,4}^t \}$ be their associated $K_\TT$-vector sets.
    \begin{figure}[h]
        \begin{center}
            \begin{tikzpicture}
                \coordinate (0) at (-1,0);
                \coordinate (1) at (-1/2,-2/3);
                \coordinate (2) at (-1/3,-1);
                \coordinate (3) at (17/4,-1);
                \coordinate (4) at (9/2,-1/2);
                \coordinate (5) at (5,0);
                \coordinate (6) at (4,9/2);
                \coordinate (7) at (4,16/3);
                \coordinate (8) at (11/4,5);
                \coordinate (9) at (4/3,4);
                \coordinate (10) at (0,4);
                \coordinate (12) at (0,5/2);
                \coordinate (p1) at (0,0);
                \coordinate (p2) at (4,0);
                \coordinate (p3) at (3,4);
                \coordinate (p4) at (1,3);
                
                \coordinate [label=$H_{1,2}^{t_1}$] (H1) at (-1.4,-0.3);
                \coordinate [label=$H_{1,3}^{t_2}$] (H2) at (-0.8,-1.1);
                \coordinate [label=$H_{1,4}^{t_3}$] (H3) at (-0.4,-1.6);
                \coordinate [label=$H_{2,3}^{t_4}$] (H4) at (4.3,-1.6);
                \coordinate [label=$H_{2,4}^{t_5}$] (H5) at (4.8,-1);
                \coordinate [label=$H_{3,4}^{t_6}$] (H6) at (4.4,4.2);
                \coordinate [label=$P_1^t$] (P1) at (-0.2,0);
                \coordinate [label=$P_2^t$] (P2) at (4.2,0);
                \coordinate [label=$P_3^t$] (P3) at (2.7,3.9);
                \coordinate [label=$P_4^t$] (P4) at (0.9,3.1);
                \coordinate [label=$v_{1,3}^t$] () at (1.3,0.8);
                \coordinate [label=$v_{2,4}^t$] () at (2.5,0.8);
                
                \begin{scope}
                    \draw (0) -- node[below] {$v_{1,2}^t$} (5);
                    \draw (1) -- node[] {} (7);
                    \draw (3) -- node[right] {$v_{2,3}^t$} (8);
                    \draw (2) -- node[left] {$v_{1,4}^t$} (9);
                    \draw (4) -- node[] {} (10);
                    \draw (6) -- node[above] {$v_{3,4}^t$} (12);
                    \fill (p1) circle (1.5pt);
                    \fill (p2) circle (1.5pt);
                    \fill (p3) circle (1.5pt);
                    \fill (p4) circle (1.5pt); 
                    \draw[-latex] (p1) -- (p2);
                    \draw[-latex] (p1) -- (p3);
                    \draw[-latex] (p1) -- (p4);
                    \draw[-latex] (p2) -- (p3);
                    \draw[-latex] (p2) -- (p4);
                    \draw[-latex] (p3) -- (p4);
                \end{scope}
            \end{tikzpicture}\caption{A $K_\TT$-translated arrangement $\A^t$. Each $H_{i,j}^t$ denotes the interseciton $\bigcap_{p \in L_i \cap L_j} H_p^t$.}\label{fig:MS(12,8)}
        \end{center}
    \end{figure}
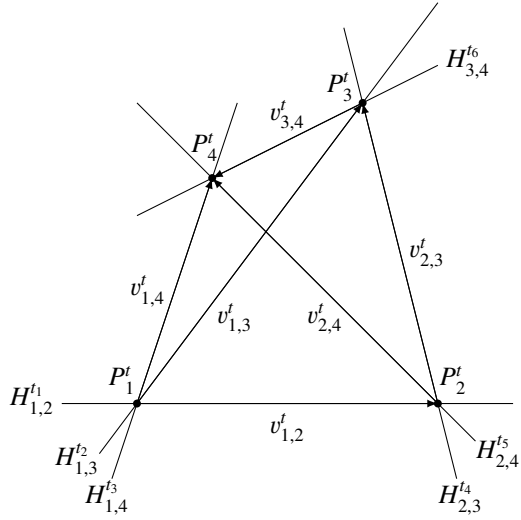
    For $l=1,2,3,4$ denote by
    \begin{align*}
        V_{[4] \setminus \{ l \}} = \left< v_{a,b}^t \mid a,b \in [4] \setminus \{ l \}, t = 1, \dots, d \right>
    \end{align*}
    the vector space spanned by vectors $v_{a,b}^t$, $a,b \in [4] \setminus \{ l \}$, $t = 1, \dots, d$. Notice that we have
    \begin{equation}
        \bigcap_{p \in \bigcap_{t \in [4] \setminus \{ l \}} L_t} H_p^0 \supset V_{[4] \setminus \{ l \}},
    \end{equation}
    equivalently,
    \begin{equation}
        \left< \alpha_p \mid p \in \bigcap_{t \in [4] \setminus \{ l \}} L_t  \right> = \sum_{p \in \bigcap_{t \in [4] \setminus \{ l \}} L_t} \left( H_p^0 \right)^\perp \subset V_{[4] \setminus \left\{ l \right\}}^\perp.
    \end{equation}\\
    Then, the $K_\TT$-vector sets satisfy the condition
    \begin{equation}\label{eq:ex3.1}
        \dim \sum_{l \in I} V_{[4] \setminus \{ l \}}^\perp
        \begin{cases}
            \geq 3 \lvert I \rvert \quad & \text{for any $I \subset [4], 1 \leq \lvert I \rvert \leq 2$}, \text{and} \\
            = 8 \quad                    & \text{for any $I \subset [4], \lvert I \rvert = 3$}.
        \end{cases}
    \end{equation}
    Conversely, let assume there exist $d (\geq 1)$ sets of vectors $\{ v_{1,2}^t, v_{1,3}^t, v_{1,4}^t \}$ satisfying the condition (\ref{eq:ex3.1}). Then, we can choose generic vectors $\alpha_l$, $l = 1, \dots, 12$ from the orthogonal spaces $V_{a,b}^\perp$, and we obtain a generic arrangement $\A^0$. In particular, the arrangement admits $K_\TT$-translations $\A^t$, $t=1, \dots, d$. That is we have intersection points $P_i^t = \bigcap_{p \in L_i} H_p^t$, $i=1,2,3,4$ for each $t = 1, \dots, d$. Thus, the vectors $\{ v_{1,2}^t, v_{1,3}^t, v_{1,4}^t \}$, $t=1, \dots, d$ are $K_\TT$-vector sets. Moreover, if $\TT$ is a 4-set defined as in this example, we can see an explicit construction of the sets $\{ v_{1,2}^t, v_{1,3}^t, v_{1,4}^t \}$, $t = 1, \dots, d$ satisfying (\ref{eq:ex3.1}).
    Let $d_{1,i} = \dim \left< v_{1,i}^t \mid t = 1, \dots, d \right>$, $i = 2,3,4$. The following proposition gives the way how to find the vectors $\{ v_{1,2}^t, v_{1,3}^t, v_{1,4}^t \}$, $t = 1, \dots, d$.
    \begin{prop}\label{lem:mainlemex2}
        The sets $\{ v_{1,2}^t, v_{1,3}^t, v_{1,4}^t \}$, $t = 1, \dots, d$ satisfy (\ref{eq:ex3.1}) if and only if $v_{1,a}^l \in \left<v_{1,a}^t \mid t = 1, \dots, d \setminus \{ l \} \right>$ for any $a \in \{ 2,3,4 \}$, $l \in [d]$ and $d_{1,k} \leq 2$, $d_{1,i} + d_{1,j} \leq 5$ for any $i,j,k$.
    \end{prop}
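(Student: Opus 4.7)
The plan is to translate the dimensional inequalities in (\ref{eq:ex3.1}) into explicit linear-algebraic conditions on the auxiliary subspaces $W_i = \left< v_{1,i}^t \mid t \in [d] \right>$ of dimension $d_{1,i}$, and then read off the equivalence with (a) and (b) case by case. Using the identity $v_{i,j}^t = v_{1,j}^t - v_{1,i}^t$, one obtains the identifications
\[
V_{[4]\setminus\{l\}} = W_i + W_j \quad \text{for } \{i,j\} = \{2,3,4\}\setminus\{l\},\ l\in\{2,3,4\},
\]
while $V_{\{2,3,4\}} = \left< v_{1,3}^t - v_{1,2}^t,\, v_{1,4}^t - v_{1,2}^t \mid t\in[d]\right> \subset W_2 + W_3 + W_4$. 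Taking orthogonal complements, (\ref{eq:ex3.1}) becomes $\dim \bigcap_{l\in I} V_{[4]\setminus\{l\}} \leq 8 - 3\lvert I\rvert$ for $\lvert I\rvert=1,2$ and $=0$ for $\lvert I\rvert=3$.

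For the ($\Leftarrow$) direction I would assume (a) and (b) and verify each case. The case $\lvert I\rvert = 1$ with $l\in\{2,3,4\}$ follows immediately from $\dim(W_i + W_j) \leq d_{1,i}+d_{1,j} \leq 5$. For $V_{\{2,3,4\}}$, the $2d$ difference generators can be trimmed using the dependencies guaranteed by (a) in each family $\{v_{1,a}^t\}_t$, yielding $\dim V_{\{2,3,4\}} \leq 5$ together with the bound $d_{1,k}\leq 2$. For $\lvert I\rvert = 2,3$ the required intersection bounds reduce to bookkeeping of the coefficients of the $v_{1,a}^t$ in elements of $(W_i + W_j)\cap(W_i + W_k)$ and the analogous expressions involving $V_{\{2,3,4\}}$: in the triple case the constraints from three different indices force all coefficients to vanish.

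For the ($\Rightarrow$) direction I would reverse the reductions. The bound $\dim V_{[4]\setminus\{l\}}\leq 5$ for $l\in\{2,3,4\}$ gives $\dim(W_i + W_j)\leq 5$, and combined with the transversality forced by the triple-intersection condition yields $d_{1,i}+d_{1,j}\leq 5$ and hence $d_{1,k}\leq 2$, proving (b). To extract (a), I would argue by contrapositive: if some $v_{1,a}^l$ fails to lie in $\left<v_{1,a}^t \mid t\ne l\right>$, then by exhibiting the vector $v_{1,a}^l$ itself as an element of an appropriate triple intersection $\bigcap_{l\in I}V_{[4]\setminus\{l\}}$, one violates the $\lvert I\rvert =3$ case of (\ref{eq:ex3.1}).

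The main obstacle will be the bookkeeping for $V_{\{2,3,4\}}$: because it is not itself of the form $W_i+W_j$ but rather spanned by differences, both its dimension and its intersections with the $V_{\{1,i,j\}}$ must be estimated via careful tracking of the coefficients of each $v_{1,a}^t$ along linear relations. It is precisely in this step that condition (a) emerges as the natural combinatorial constraint governing when the difference generators collapse sufficiently to satisfy the dimensional bounds.
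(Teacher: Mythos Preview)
Your overall strategy matches the paper's: pass from (\ref{eq:ex3.1}) to bounds on $\dim \bigcap_{l\in I} V_{[4]\setminus\{l\}}$ via orthogonal complements, identify $V_{[4]\setminus\{l\}} = W_i + W_j$ for $l\in\{2,3,4\}$, and analyse case by case. There is, however, a genuine gap in your $(\Rightarrow)$ argument for (a). From the hypothesis $v_{1,a}^l \notin \langle v_{1,a}^t \mid t\neq l\rangle$ alone you cannot place $v_{1,a}^l$ in any triple intersection $\bigcap_{l\in I}V_{[4]\setminus\{l\}}$: the vector automatically lies only in the two spaces $V_{[4]\setminus\{b\}}$ with $b\notin\{1,a\}$, and nothing in your hypothesis forces it into $V_{[4]\setminus\{1\}}$ or into $V_{[4]\setminus\{a\}} = W_b + W_c$. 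The paper's contradiction argument here actually starts from a \emph{different} assumption, namely $v_{1,2}^j \in \langle v_{1,3}^t, v_{1,4}^t\rangle = W_3+W_4$; that does put $v_{1,2}^j$ into $V_{\{1,3,4\}}\cap V_{\{1,2,4\}}\cap V_{\{1,2,3\}}$ and contradicts the $|I|=3$ case. What the $|I|=3$ condition really delivers is the direct-sum property $W_a \cap (W_b+W_c)=\{0\}$, recorded in the paper as (\ref{eq:dim}); this is then combined with the $|I|=2$ bound (not the $|I|=1$ bound, as you wrote) to obtain $d_{1,k}=\dim\bigl((W_k+W_i)\cap(W_k+W_j)\bigr)\leq 2$. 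In particular ``$d_{1,i}+d_{1,j}\leq 5$ hence $d_{1,k}\leq 2$'' is not a valid inference.

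For $(\Leftarrow)$ your plan is to verify (\ref{eq:ex3.1}) directly from (a) and (b), whereas the paper argues by \emph{existence}: it only checks that $\sum_i d_{1,i}\leq 6<8$, so that the required independent vectors can be chosen in $\CC^8$, and then appeals to the equivalence (\ref{eq:equi}). These are different routes; yours is closer to a literal proof of the biconditional, but the cases involving $l=1$ (i.e.\ $V_{\{2,3,4\}}$, which is spanned by differences and is not of the form $W_i+W_j$) will require the careful coefficient bookkeeping you allude to, and this is exactly the work that the paper's existence shortcut sidesteps.
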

    \begin{proof}
        First, let us prove that if the sets $\{ v_{1,2}^t, v_{1,3}^t, v_{1,4}^t \}$, $t = 1, \dots, d$ satisfy (\ref{eq:ex3.1}), then $v_{1,a}^l \in \left<v_{1,a}^t \mid t = 1, \dots, d \setminus \{ l \} \right>$ for any $a \in \{ 2,3,4 \}$, $l \in [d]$. We prove this in the case of $a=2$ by contradiction, assuming that there exists a vector $v_{1,2}^j \in \left< v_{1,3}^t, v_{1,4}^t \mid t = 1, \dots, d \right>$. In this case we have $v_{1,2}^j \in V_{[4] \setminus \{ 2 \}}$. \\
        By (\ref{eq:ex3.1}) we have
        \begin{equation}\label{eq:equiv}
            \dim \sum_{l = 2}^4 V_{[4] \setminus \{ l \}}^\perp = 8 \iff \dim \bigcap_{l=2}^4 V_{[4] \setminus \{ l \}} = 0.
        \end{equation}
        On the other hand, since $v_{1,2}^j \in V_{[4] \setminus \{ l \}}$, $l=3,4$ and also $v_{1,2}^j \in V_{[4] \setminus \{ 2 \}}$ by assumption, it follows that $v_{1,2}^j \in \bigcap_{l=2}^4 V_{[4] \setminus \{ l \}}$, which contradicts the fact that $\dim \bigcap_{l=2}^4 V_{[4] \setminus \{ l \}} = 0$. Thus, $v_{1,2}^j \in \left< v_{1,2}^t \mid t = 1, \dots, d \setminus \{ j \} \right>$. The analogous proofs follow for any $a \in \{ 2,3,4 \}$ and $l \in [d]$. \\
        Secondly, let us prove $d_{1,k} \leq 2$ and $d_{1,i} + d_{1,j} \leq 5$ for any $i,j,k$. By the fact we have just proved we obtain
        \begin{equation}\label{eq:dim}
            \begin{split}
                & \left< v_{1,2}^t, v_{1,3}^t \mid t = 1, \dots, d \right> \cap \left< v_{1,4}^t \mid t = 1, \dots, d \right> = \{ 0 \} \ \text{and} \\
                & \left< v_{1,2}^t \mid t = 1, \dots, d \right> \cap \left< v_{1,3}^t \mid t = 1, \dots, d \right> = \{ 0 \}.
            \end{split}
        \end{equation}
        On the other hand, we have
        \begin{equation}\label{eq:equi}
            \displaystyle \dim \sum_{l \in I} V_{[4] \setminus \{ l \}}^\perp \geq 3 \lvert I \rvert \iff \dim \bigcap_{l \in I} V_{[4] \setminus \{ l \}}
            \leq 8 - 3 \lvert I \rvert
        \end{equation}
        for any $I \subset [4], 1 \leq \lvert I \rvert \leq 2$ by (\ref{eq:ex3.1}); thus we have $d_{1,k} \leq 2$ and $d_{1,i} + d_{1,j} \leq 5$ for any $i,j,k$. \\
        Conversely, let us assume $v_{1,a}^l \in \left<v_{1,a}^t \mid t = 1, \dots, d \setminus \{ l \} \right>$ for any $a \in \{ 2,3,4 \}$, $l \in [d]$ and $d_{1,k} \leq 2$, $d_{1,i} + d_{1,j} \leq 5$ for any $i,j,k$. In consideration of (\ref{eq:equi}), if there exist the sets satisfying the assumptions, then the proof would be completed. For this reason it is sufficient to show that there exist such the sets. \\
        Since $v_{1,a}^l \in \left<v_{1,a}^t \mid t = 1, \dots, d \setminus \{ l \} \right>$ for any $a \in \{ 2,3,4 \}$, $l \in [d]$, we have (\ref{eq:dim}). In particular, we have 
        \begin{equation*}
            \dim \left< v_{1,2}^t, v_{1,3}^t, v_{1,4}^t \mid t = 1, \dots, d \right>
        \end{equation*}
        \begin{equation*}
            \begin{split}
                & = \dim \left< v_{1,2}^t \mid t = 1, \dots, d \right> +  \dim \left< v_{1,3}^t \mid t = 1, \dots, d \right> + \dim \left< v_{1,4}^t \mid t = 1, \dots, d \right> \\
                & = d_{1,2} + d_{1,3} + d_{1,4}.
            \end{split}
        \end{equation*}
        To construct the the sets $\{ v_{1,2}^t, v_{1,3}^t, v_{1,4}^t \}$, $t = 1, \dots, d$, we need to choose $3d$ vectors $v_{1,i}^t, i = 2,3,4, t = 1, \dots, d$ with $\dim \left< v_{1,i}^t \mid t = 1, \dots, d \right> = d_{1,i}$, $i = 2,3,4$. In particular, it is sufficient to choose $d_{1,2} + d_{1,3} + d_{1,4}$ independent vectors in $\CC^8$. By assumption we have $d_{1,k} \leq 2$ and $d_{1,i} + d_{1,j} \leq 5$ for any $i,j,k$. Notice that the second inequality automatically follows since the first one holds for any $k$. This implies that $d_{1,2} + d_{1,3} + d_{1,4} \leq 6 < 8$. Thus, the sets we expected actually exist.
    \end{proof}
\end{ex}
\section{A classification of $r$-sets}\label{sec:classification}
In this section we classify the $r$-sets $\TT = \{ L_1, \dots, L_r \}$ into non-intersecting and intersecting types. Each $L_i$ is the set of indices of hyperplanes defining a hyperplane $D_{L_i}$ of the discriminantal arrangement. Since we are considering arrangements in $\CC^k$ and focusing on $r$-simple intersections, we assume $\lvert L_i \rvert = k+1$, $i=1, \dots, r$.
\subsection{Non-intersecting type $r$-sets}\label{sec:Benoit_type}
To begin with, let us consider $r$-sets for simple case that intersections of all three sets of an $r$-set are empty. More precisely, we give the following definition.
\begin{defi}
    Let $r \geq 4$. We say that $r$-set $\TT = \{ L_1, \dots, L_r \}$ is \textit{non-intersecting type} if $L_i \cap L_j \cap L_k = \emptyset$ for any distinct $i,j,k$.
\end{defi}
\noindent Let us denote
\begin{equation}
    L_i = \bigcup_{j \in [r] \setminus \{ i \}} A_{i,j},
\end{equation}
where $A_{i,j} = L_i \cap L_j \subset [n]$ and $\lvert A_{i,j} \rvert = a_{i,j}$, $a_{i,j} \geq 1$. \\
Since $A_{1,j} = L_j \setminus \bigcup_{l \in [r] \setminus \{ 1,j \}} A_{l,j}$, we have
\begin{equation}\label{eq:a_ij}
    a_{1,j} = k+1 - \sum_{l \in [r] \setminus \{ 1,j \}} a_{l,j}.
\end{equation}
By summing both sides of the formula (\ref{eq:a_ij}) for $j \in [r] \setminus \{ 1 \}$, we obtain
\begin{equation}
    \begin{split}
        \sum_{j \in [r] \setminus \{ 1 \}} a_{1,j} &= \sum_{j \in [r] \setminus \{ 1 \}} \left( k+1 - \sum_{l \in [r] \setminus \{ 1,j \}} a_{l,j} \right) \\
        &= \sum_{j \in [r] \setminus \{ 1 \}} (k+1) - \sum_{j \in [r] \setminus \{ 1 \}} \sum_{l \in [r] \setminus \{ 1,j \}} a_{l,j}.
    \end{split}
\end{equation}
Thus, we have
\begin{equation}\label{eq:rel}
    k+1 = (r-1)(k+1) - 2\sum_{l,t \in [r] \setminus \{ 1 \}, l<t} a_{l,t}.
\end{equation}
Equivalently,
\begin{equation}
    \sum_{l,t \in [r] \setminus \{ 1 \}, l<t} a_{l,t} = \frac{(r-2)(k+1)}{2}.
\end{equation}
Thus, in terms of $r,k$ the number of hyperplanes can be written as
\begin{equation}\label{eq:allforB}
    \begin{split}
        \lvert \bigcup_{i=1}^r L_i \rvert = \sum_{1 \leq i < j \leq r} a_{i,j} &= \sum_{j \in [r] \setminus \{ 1 \}} a_{1,j} + \sum_{l,t \in [r] \setminus \{ 1 \}, l<t} a_{l,t} \\
        &= (k+1) + \frac{(r-2)(k+1)}{2} = \frac{r(k+1)}{2}.
    \end{split}
\end{equation}
Since $a_{i,j} \geq 1$
\begin{equation}
    \begin{split}
        \sum_{1 \leq i<j \leq r} a_{i,j} &= \sum_{j \in [r] \setminus \{ 1 \}} a_{1,j} + \sum_{l,t \in [r] \setminus \{ 1 \}, l<t} a_{l,t} \\
        & \geq r-1 + \frac{(r-1)(r-2)}{2} = \binom{r}{2}.
    \end{split}
\end{equation}
Notice that since  $\frac{r(k+1)}{2} \geq \binom{r}{2}$, we have
\begin{equation}\label{eq:dimforB}
    k \geq r-2.
\end{equation}

Once we determine the tuple $(a_{i,j})_{i,j}$, the $r$-set $\TT = \{ L_1, \dots, L_r \}$ is uniquely determined up to renumbering of elements in $\displaystyle \bigcup_{i=1}^r L_i$ or indices of the sets $L_i$. Thus, to classify $r$-sets $\TT$ of non-intersecting type, it is enough to determine a tuple $(a_{i,j})_{i,j}$ assuming (\ref{eq:allforB}) and (\ref{eq:dimforB}). In particular, it is enough to determine $a_{l,j}$ for $2 \leq l<j \leq r$, since once $a_{l,j}$ for $2 \leq l <j \leq r$ are determined, the remaining ones $a_{1,j}$, $j \in [r] \setminus \{ 1 \}$ are automatically determined by (\ref{eq:a_ij}). \\
In other words, $r$-sets of non-intersecting type correspond to decompositions into the sum
\begin{equation}
    \frac{(r-2)(k+1)}{2} = \sum_{l,t \in [r] \setminus \{ 1 \}, l<t} a_{l,t}.
\end{equation}
Summarizing the above discussion, we obtain the following proposition.
\begin{prop}\label{prop:miniprop}
    Let $r \geq 4$, $k \geq r-2$ and $\TT = \{ L_1, \dots, L_r \}$ be an $r$-set of non-intersecting type such that $\lvert \bigcup_{i=1}^r L_i \rvert  = \frac{r(k+1)}{2}$. Then, the tuple $(a_{i,j})_{1 \leq i< j \leq r}$ one-to-one corresponds to the tuple $(a_{l,t})_{l,t \in [r] \setminus \{ 1 \}}, l<t$, which also corresponds to the decomposition into the sum
    \begin{equation}
        \frac{(r-2)(k+1)}{2} = \sum_{l,t \in [r] \setminus \{ 1 \}, l<t} a_{l,t}.
    \end{equation}
\end{prop}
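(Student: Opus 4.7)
The plan is to verify the bijection in both directions using the relations derived in the preceding discussion. The forward direction --- restriction of $(a_{i,j})_{1 \leq i<j \leq r}$ to the subfamily indexed by $l,t \in [r]\setminus\{1\}$ --- is tautological. For the converse, I would use formula (\ref{eq:a_ij}) to reconstruct the missing entries: given a decomposition $\frac{(r-2)(k+1)}{2} = \sum_{l<t,\, l,t \in [r]\setminus\{1\}} a_{l,t}$, define $a_{1,j} := k+1 - \sum_{l \in [r]\setminus\{1,j\}} a_{l,j}$ for each $j \neq 1$. In the non-intersecting case, $L_j$ is the disjoint union of the $A_{l,j}$ with $l \in [r]\setminus\{j\}$, so this formula is forced by $\lvert L_j \rvert = k+1$, and the assignment is therefore uniquely determined.

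Next I would verify consistency, namely that the reconstructed values satisfy $\sum_{j \neq 1} a_{1,j} = k+1$ so that $\lvert L_1 \rvert = k+1$ is also respected. This is exactly the computation carried out in (\ref{eq:rel}): summing (\ref{eq:a_ij}) over $j \neq 1$ yields $(r-1)(k+1) - 2 \sum_{l<t,\, l,t \neq 1} a_{l,t}$, and substituting the decomposition hypothesis $\sum_{l<t,\, l,t \neq 1} a_{l,t} = \tfrac{(r-2)(k+1)}{2}$ collapses this to $(r-1)(k+1) - (r-2)(k+1) = k+1$, as required. Hence the two parametrizations coincide, and the map $(a_{i,j}) \mapsto (a_{l,t})$ is indeed a bijection onto such decompositions.

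Finally, I would remark that in the non-intersecting setting the tuple $(a_{i,j})_{1 \leq i<j \leq r}$ determines the $r$-set $\TT$ up to relabeling of the elements of $\bigcup_i L_i$ and of the indices $1,\dots,r$, because each $L_i$ is the disjoint union $\bigsqcup_{j \neq i} A_{i,j}$ with $\lvert A_{i,j} \rvert = a_{i,j}$ and no further combinatorial data is needed. The only mild subtlety --- and the closest thing to an obstacle --- is the positivity requirement $a_{1,j} \geq 1$ coming from the $r$-set axiom $L_i \cap L_j \neq \emptyset$, which translates into the inequalities $\sum_{l \in [r]\setminus\{1,j\}} a_{l,j} \leq k$ on the decomposition. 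Since the statement assumes we begin with an actual $r$-set $\TT$, these inequalities hold automatically, and the correspondence is a clean bookkeeping identity.
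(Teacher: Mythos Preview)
Your proposal is correct and follows exactly the approach of the paper: the proposition is stated there as a summary of the preceding discussion, and your argument simply formalizes those same steps --- restricting to the subtuple, reconstructing $a_{1,j}$ via (\ref{eq:a_ij}), and checking consistency via (\ref{eq:rel}). Your added remark on the positivity constraint $a_{1,j}\geq 1$ is a welcome clarification the paper leaves implicit.
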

Let us see examples of $r$-sets of non-intersecting type.
\begin{ex}[$4$-set of non-intersecting type]
    Let us consider $r = 4$. In this case we have a correspondence between tuples $(a_{i,j})_{2 \leq i<j \leq 4}$ and sum decompositions $\frac{(r-2)(k+1)}{2} = k+1 = a_{2,3} + a_{2,4} + a_{3,4}$, where $k \geq 2$. The following are examples for $k = 2$ and $k = 3$.
    \paragraph{The case $k=2$}
    There exists only one correspondence:
    \begin{equation}
        \begin{split}
            3 = a_{2,3} + a_{2,4} + a_{3,4} &= 1 + 1 + 1 \\
            & \leftrightarrow (a_{1,2}, a_{1,3}, a_{1,4}, a_{2,3}, a_{2,4}, a_{3,4}) = (1,1,1,1,1,1).
        \end{split}
    \end{equation}
    In particular, we have a 4-set $\TT = \{ L_1, L_2, L_3, L_4 \}$ with $ L_1 = \{ 1,2,3 \}, L_2 = \{ 1,4,5 \}, L_3 = \{ 2,4,6 \}, L_4 = \{ 3,5,6 \}$ for example. This 4-set is assumed in Example \ref{ex:MS(6,2)}.
    
    \paragraph{The case $k=3$}
    The following is one example:
    \begin{equation}
        \begin{split}
            4 = a_{2,3}+ a_{2,4}+ a_{3,4} &= 2 + 1 + 1 \\
            & \leftrightarrow (a_{1,2}, a_{1,3}, a_{1,4}, a_{2,3}, a_{2,4}, a_{3,4}) = (1,1,2,2,1,1).
        \end{split}
    \end{equation}
    In particular, we have $\TT = \{ L_1, L_2, L_3, L_4 \}$ with $L_1 = \{ 1,2,3,4 \}, L_2 = \{ 1,5,6,7 \}, L_3 = \{ 2,5,6,8 \}, L_4 = \{ 3,4,7,8 \}$ for example.
\end{ex}

\begin{ex}[$5$-set of non-intersecting type]
    Let us consider $r = 5$. In this case we have correspondence between tuples $(a_{i,j})_{2 \leq i<j \leq 5}$ and sum decompositions $\frac{(r-2)(k+1)}{2} = \frac{3(k+1)}{2} = a_{2,3} + a_{2,4} + a_{2,5} + a_{3,4} + a_{3,5} + a_{4,5}$. Since the number $\frac{3(k+1)}{2}$ corresponds to the number of hyperplanes, $k$ should be odd number with $k \geq 3$. \\
    Let us see examples for $k = 3$ and $k = 5$.
    \paragraph{The case $k=3$}
    There exists only one correspondence:
    \begin{equation}
        \begin{split}
            6 &= a_{2,3} + a_{2,4} + a_{2,5} + a_{3,4} + a_{3,5} + a_{4,5} = 1 + 1 + 1 + 1 + 1 + 1 \\
            &\leftrightarrow (a_{1,2}, a_{1,3}, a_{1,4}, a_{1,5}, a_{2,3},  a_{2,4},  a_{2,5},  a_{3,4},  a_{3,5},  a_{4,5}) = (1,1,1,1,1,1,1,1,1,1).
        \end{split}
    \end{equation}
    For example we have $\TT = \{ L_1, L_2, L_3, L_4, L_5 \}$ with
    $L_1 = \{ 1,2,3,4 \}, L_2 = \{ 1,5,6,7 \}, L_3 = \{ 2,5,8,9 \}, L_4 = \{ 3,6,8,10 \}, L_5 = \{ 4,7,9,10 \}$. This 5-set is provided in Example 5.3 in \cite{SY}.
    
    \paragraph{The case $k=5$}
    Following is one example:
    \begin{align*}
        9 & = a_{2,3} + a_{2,4} + a_{2,5} + a_{3,4} + a_{3,5} + a_{4,5} = 1 + 1 + 1 + 2 + 2 + 2                                                      \\
          & \leftrightarrow (a_{1,2}, a_{1,3}, a_{1,4}, a_{1,5}, a_{2,3},  a_{2,4},  a_{2,5},  a_{3,4},  a_{3,5},  a_{4,5}) = (3,1,1,1,1,1,1,2,2,2).
    \end{align*}
    For example we have $\TT = \{ L_1, L_2, L_3, L_4, L_5 \}$ with $L_1 = \{ 1,2,3,4,5,6 \}, L_2 = \{ 1,2,3,7,8,9 \}, L_3 = \{ 4,7,10,11,12,13 \}, L_4 = \{ 5,8,10,11,14,15 \}, L_5 = \{ 6,9,12,13,14,15 \}$.
\end{ex}
\subsection{Intersecting type $r$-sets}
As the next, we consider more complicated case admitting some three sets of an $r$-set are not empty.

\begin{defi}
    Let $r \geq 3$. We say that $r$-set $\TT = \{ L_1, \dots, L_r \}$ is an \textit{intersecting type} if $r=3$ and $L_i \cap L_j \cap L_k = \emptyset$ or if $r \geq 4$
    $L_i \cap L_j \cap L_k \neq \emptyset$ for some distinct $i, j, k$. 
\end{defi}
\begin{rem}
    It is natural to classify $3$-set with $L_i \cap L_j \cap L_k = \emptyset$ into non-intersecting type, but we regard such $3$-set as the trivial intersecting type for later convenience.
\end{rem}

\noindent Let us denote
\begin{equation}
    L_i = \bigcup_{j \in [r] \setminus \{ i \}} A_{i,j},
\end{equation}
where $A_{i,j} = L_i \cap L_j \subset [n]$. For $I \subset [r]$ with $2 \leq \lvert I \rvert \leq r-1$ we denote $A_I = \bigcap_{i \in I} L_i$ and $\lvert A_I \rvert = a_I$. We assume $a_{[r]} = 0$, and otherwise, by considering a restriction arrangement we obtain an arrangement with $r$-set $\TT' = \{ L_i' \}_{i=1, \dots, r}$ such that $\bigcap_{i=1}^r L_i' = \emptyset$, i.e., $a_{[r']} = 0$, $r' < r$. Since for $I, J \subset [r]$ we should have $\lvert A_I \rvert > \lvert A_J \rvert$ if $\lvert I \rvert < \lvert J \rvert$, we also assume $a_I > a_J$ if $\lvert I \rvert < \lvert J \rvert$. \\
Let us denote $\II_i^l = \{ I \subset [r] \mid \ \lvert I \rvert = l, i \in I \}$. \\
Since
\begin{equation*}
    L_i = \bigcup_{j \in [r] \setminus \{ i \}} A_{i,j},
\end{equation*}
we have
\begin{equation}\label{eq:Li}
    k+1 = \sum_{l=2}^{r-1} (-1)^l \sum_{I \in \II_i^l} a_I = \sum_{j \in [r] \setminus \{ i \}} a_{i,j} + \sum_{l=3}^{r-1} (-1)^l \sum_{I \in \II_i^l} a_I
\end{equation}
by set theoretic computation.
\begin{rem}
    If $a_I = 0$ for any $I$ with $\lvert I \rvert \geq 3$, we obtain the non-intersecting type $r$-sets explained in subsection \ref{sec:Benoit_type}.
\end{rem}
\noindent Since $A_{i,j} = L_j \setminus \bigcup_{l \in [r] \setminus \{ i,j \}} A_{l,j}$, we have
\begin{equation}\label{eq:a_ij_2}
    a_{1,j} = k+1 - \sum_{l \in [r] \setminus \{ 1,j \}} a_{l,j} - \sum_{l=3}^{r-1} (-1)^l \sum_{I \in \II_1^l} a_I.
\end{equation}
\noindent By summing both sides of the formula (\ref{eq:a_ij_2}) for all $j \in [r] \setminus \{ 1 \}$, we obtain
\begin{equation}
    \begin{split}
        &\sum_{j \in [r] \setminus \{ 1 \}} a_{1,j} = \sum_{j \in [r] \setminus \{ 1 \}} \left( k+1 - \sum_{l \in [r] \setminus \{ 1,j \}} a_{l,j} - \sum_{l=3}^{r-1} (-1)^l \sum_{I \in \II_j^l} a_I \right) \\
        & = (r-1)(k+1) - \sum_{j \in [r] \setminus \{ 1 \}} \sum_{l \in [r] \setminus \{ 1,j \}} a_{l,j} - \sum_{j \in [r] \setminus \{ 1 \}} \sum_{l=3}^{r-1} (-1)^l \sum_{I \in \II_j^l} a_I.
    \end{split}
\end{equation}
Thus, we  have the following relation.
\begin{equation}
    \begin{split}
        &k+1 - \sum_{l=3}^{r-1} (-1)^l \sum_{I \in \II_1^l} a_I \\
        & = (r-1) (k+1) - \sum_{j \in [r] \setminus \{ 1 \}} \sum_{l \in [r] \setminus \{ 1,j \}} a_{l,j} - \sum_{j \in [r] \setminus \{ 1 \}} \sum_{l=3}^{r-1} (-1)^l \sum_{I \in \II_j^l} a_I;
    \end{split}
\end{equation}
that is
\begin{equation}\label{eq:last_eq}
    \begin{split}
        & (r-2)(k+1) = \\
        & \sum_{j \in [r] \setminus \{ 1 \}} \sum_{l \in [r] \setminus \{ 1,j \}} a_{l,j} + \sum_{j \in [r] \setminus \{ 1 \}} \sum_{l=3}^{r-1} (-1)^l \sum_{I \in \II_j^l} a_I - \sum_{l=3}^{r-1} (-1)^l \sum_{I \in \II_i^l} a_I \\
        & = 2 \sum_{l,t \in [r] \setminus \{ 1 \}, l < t} a_{l,t} + \sum_{l=3}^{r-1} (-1)^l \left( \sum_{j \in [r] \setminus \{ 1 \}} \sum_{I \in \II_j^l} a_I - \sum_{I \in \II_1^l} a_I \right).
    \end{split}
\end{equation}

\noindent In terms of $r,k$, and $a_I$, the number of hyperplanes can be written as 
\begin{equation}
    \begin{split}
        \lvert \bigcup_{i=1}^r L_i \rvert & = \sum_{l=2}^{r-1} (-1)^l \sum_{j \in [r]} \sum_{I \in \II_j^l} a_I                                                                                      \\
        & = \sum_{[r] \setminus \{ 1 \}} a_{1,j} + \sum_{l,t \in [r] \setminus \{ 1 \}} a_{l,t} + \sum_{j \in [r]} \sum_{l=3}^{r-1}(-1)^l \sum_{I \in \II_j^l} a_I \\
        & =(r-1)(k+1) - \sum_{l,t \in [r] \setminus \{ 1 \}, l<t} a_{l,j} + \sum_{l=3}^{r-1} (-1)^l \sum_{I \in \II_1^l} a_I.
    \end{split}
\end{equation}

Unlike non-intersecting case, computing the number of hyperplanes would be complicated because of the terms $\sum a_{l,j}$ and $\sum_l (-1)^l \sum a_I$. It would be also difficult to give an explicit lower bound for $k$ which is the dimension of the space. These facts give us the difficulty to give an explicit classification of general $r$-sets of intersecting type as in Proposition \ref{prop:miniprop}.

Remark that once we fix numbers $\{ a_{l,t} \}_{l,t \in [r] \setminus \{ 1 \}, l<t}$, $\{ a_I \}_{I \subset [r], \mid I \mid \geq 3}$ satisfying (\ref{eq:last_eq}), the remaining numbers $\{ a_{1,j} \}_{j \in [r] \setminus \{ 1 \}}$ are uniquely determined by equation (\ref{eq:a_ij_2}), and thus, the $r$-set $\TT = \{ L_1, \dots, L_r \}$ is uniquely determined up to renumbering of elements in $\bigcup_{i=1}^r L_i$ or indices of the sets $L_i$. 

Though it is difficult task to give an explicit classification of general intersecting $r$-sets as described in Section 4.1, it includes an important class of $r$-sets which already appeared in \cite{LS}, \cite{SSY1}, \cite{SSY2}.
Let us see such examples.

\begin{ex}[Good $3s$-partition]\label{ex:good3s}
    Let us consider $r=3$. In this case we have $\lvert \bigcup_{i=1}^3 L_i \rvert = \frac{3(k+1)}{2}$. Since the number $\frac{3(k+1)}{2}$ corresponds to the number of hyperplanes, $k$ should be a positive odd number, so we can write it as $k = 2s-1$. Since in the case of $s = 1$ non-very generic arrangement does not appear (in this case we have braid arrangement with three hyperplanes $\{ H_{i,j} \}_{1 \leq i<j \leq 3}$, which is very generic), we have to assume $s \geq 2$.  \\
    We have $\lvert L_i \rvert = k + 1 = 2s$ and $\lvert L_i \cap L_j \rvert = s$ since there is only one correspondence
    \begin{equation}
        \frac{(3-2)(k+1)}{2} = s = a_{2,3} \leftrightarrow (a_{1,2}, a_{1,3}, a_{2,3}) = (s,s,s) .
    \end{equation}
    Remark that the 3-set $\TT = \{ L_1, L_2, L_3 \}$ given in this example is the good $3s$-partition first considered in \cite{LS} and developed in \cite{SSY1}.
\end{ex}

\begin{ex}[4-set of intersecting type]\label{ex:good12}
    Equation (\ref{eq:last_eq}) would be
    \begin{equation}
        18 = 2(a_{2,3} + a_{2,4} + a_{3,4}) - (a_{1,2,3} + a_{1,2,4} + a_{1,3,4}) - 3a_{2,3,4}.
    \end{equation}
    Consider the following tuple
    \begin{equation}
        (a_{2,3},a_{2,4},a_{3,4},a_{1,2,3},a_{1,2,4},a_{1,3,4},a_{2,3,4}) = (6,6,6,3,3,3,3).
    \end{equation}
    Then the other corresponding numbers are
    \begin{equation}\label{eq:MS(12,8)}
        (a_{1,2},a_{1,3},a_{1,4}) = (3,3,3).
    \end{equation}
    Then we have a 4-set $\TT = \{ L_1, L_2, L_3, L_4 \}$ with $L_1 = [12] \setminus \{ 10,11,12 \}, L_2 = [12] \setminus \{ 7,8,9 \}, L_3 = [12] \setminus \{ 4,5,6 \}, L_4 = [12] \setminus \{ 1,2,3 \}$ for example. Notice that this 4-set is assumed in Example \ref{ex:MS(12,8)} (see also \cite{SY}).
\end{ex}

\noindent The following proposition is a generalization of Examples \ref{ex:good3s} and \ref{ex:good12}.

\begin{prop}\label{ex:goodrs}
    Let us fix $s \geq r - 1$. For subsets $K_i \subset [rs]$ such that $\lvert K_i \rvert = \lvert K_j \rvert = s$, $K_i \cap K_j = \emptyset$ for any $i,j$ and $\bigcup_{i=1}^r K_i = [rs]$ define $L_i = [rs] \setminus K_i$. Then, the equation (\ref{eq:last_eq}) holds.
\end{prop}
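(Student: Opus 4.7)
The plan is to exploit the fact that equation~(\ref{eq:last_eq}) was derived in the text as a purely algebraic consequence of the inclusion--exclusion identity~(\ref{eq:Li}) applied to each $L_i$: equation~(\ref{eq:a_ij_2}) is simply~(\ref{eq:Li}) for $L_j$ solved for $a_{1,j}$, and the derivation then sums~(\ref{eq:a_ij_2}) over $j \in [r]\setminus\{1\}$ and substitutes~(\ref{eq:Li}) for $L_1$. Hence it suffices to establish~(\ref{eq:Li}) for every $i \in [r]$ in the proposed construction.

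First I would compute all intersection cardinalities. Because the $K_i$ are pairwise disjoint of cardinality $s$ with $\bigcup_{i=1}^r K_i = [rs]$, for any $I \subset [r]$ with $\lvert I \rvert = l \leq r-1$ one has
\[
A_I \;=\; \bigcap_{i \in I} L_i \;=\; [rs]\setminus \bigcup_{i \in I} K_i, \qquad a_I \;=\; rs - ls \;=\; (r-l)s.
\]
In particular $a_I \geq s \geq 1$ for $l \leq r-1$ and $a_{[r]} = 0$, so the admissibility conditions on the $a_I$ imposed in Section~4.2 are satisfied; moreover $\lvert L_i \rvert = (r-1)s$, which identifies $k+1 = (r-1)s$, and $a_{i,j} = (r-2)s \geq 1$ shows that $\TT$ is a genuine $r$-set. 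For $r \geq 4$ one also has $a_{i,j,k} = (r-3)s > 0$, confirming that the $r$-set is of intersecting type.

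Second I would verify the covering property $L_i = \bigcup_{j \neq i}(L_i \cap L_j)$ on which inclusion--exclusion rests. For any $p \in L_i = [rs]\setminus K_i$, the partition hypothesis picks out the unique block $K_{j_0}$ with $j_0 \neq i$ containing $p$; hence $p \in L_m$ for every $m \neq j_0$, and in particular for some $m \neq i$. Standard inclusion--exclusion then yields
\[
k+1 \;=\; \lvert L_i \rvert \;=\; \sum_{l=2}^{r-1} (-1)^l \sum_{I \in \II_i^l} a_I,
\]
which is precisely~(\ref{eq:Li}) for the index $i$. Replaying the derivation of~(\ref{eq:last_eq}) from the text with this in hand produces the claim.

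The proof is mostly organizational, and the only point requiring genuine care is to ensure that the partition hypothesis on the $K_i$ is actually invoked; it enters exactly when establishing $L_i = \bigcup_{j\neq i}(L_i\cap L_j)$, without which inclusion--exclusion would fail. As an independent sanity check, plugging $a_I = (r-l)s$ directly into~(\ref{eq:last_eq}) gives $(r-2)(r-1)s$ on the left and $(r-1)(r-2)^2 s - (r-1)(r-2)(r-3)s = (r-1)(r-2)s$ on the right, after the alternating binomial sum $\sum_{l=3}^{r-1}(-1)^l(r-l)\binom{r-1}{l-1} = -(r-1)(r-3)$ is computed using $\sum_{l=0}^{r-1}(-1)^l \binom{r-1}{l} = 0$ and its linearly weighted analogue.
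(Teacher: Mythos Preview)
Your proof is correct. Your primary argument is somewhat different from the paper's: you observe that equation~(\ref{eq:last_eq}) was derived in the text as a formal algebraic consequence of the inclusion--exclusion identity~(\ref{eq:Li}) holding for every index, so it suffices to verify~(\ref{eq:Li}) for the partition construction --- which is immediate once one checks the covering $L_i = \bigcup_{j \neq i} A_{i,j}$ and computes $a_I = (r - \lvert I\rvert)s$. The paper instead plugs these values of $a_I$ directly into both sides of~(\ref{eq:last_eq}) and evaluates the resulting binomial sum explicitly, obtaining $(r-2)(r-1)s$ on each side; this is exactly the computation you offer as your closing sanity check. Your route is more structural and avoids re-doing the alternating sum, at the cost of relying on the reader to trace back through the derivation of~(\ref{eq:last_eq}); the paper's route is self-contained but computationally heavier. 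One small point: in your covering argument you should make explicit that the index $m \neq i$ you select must also satisfy $m \neq j_0$, which is possible precisely because $r \geq 3$.
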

\begin{proof}
    The proof is by direct computation. Since
    \begin{align*}
        (r - 2)(k + 1) = (r - 2)(r - 1)s
    \end{align*}
    by (\ref{eq:a_ij_2}), while
    \begin{align*}
         & (r-2)(k+1) = 2 \sum_{l,t \in [r] \setminus \{ 1 \}, l < t} a_{l,t} + \sum_{l=3}^{r-1} (-1)^l \left( \sum_{j \in [r] \setminus \{ 1 \}} \sum_{I \in \II_j^l} a_I - \sum_{I \in \II_1^l} a_I \right) \\
         & = 2\binom{r - 1}{2} (r-2)s + \sum_{l=3}^{r-1} (-1)^l \binom{r-1}{l-1}(r-l)s \left( r - 2 \right) = (r-2)(r-1)s.
    \end{align*}
    Thus, the equation (\ref{eq:last_eq}) holds.
\end{proof}

\begin{rem}
    For non-intersecting type $\TT$, we gave an explicit classification as in Proposition \ref{prop:miniprop}, while for intersecting type $\TT$ it would be complicated task to give such a classification. The $r$-set assmued in Proposition \ref{ex:goodrs} would be the first classification for intersecting type $r$-set, which is one of the main topics in Section \ref{sec:good_rs}.
\end{rem}
\section{A sufficient conditions for non-very genericity}\label{sec:good_rs}
For a fixed $d \in \ZZ$ consider an $r$-set $\TT = \{ L_1, \dots, L_r\}$ and sets of vectors $\{ v_{i,j}^t \}_{i=2, \dots, r}$, $t=1, \dots, d$ where each vector satisfies $v_{i,j}^t= v_{1,j}^t - v_{1,i}^t$. 

In this section let us give sufficient conditions for $\A^0$ to be non-very generic by giving conditions on the certain numbers of the sets $\{ v_{1,i}^t \}_{i=2, \dots, r}$, $t=1, \dots, d$ and showing they are sufficient conditions to be $K_\TT$-vector set.
The conditions would be tractable and easy ones to check by hand. 

\begin{nota}
    We denote $a_I = \lvert \bigcap_{i \in I} L_i \rvert$. If $\lvert I \rvert = 2$, we also denote $a_{i,j} = \lvert L_i \cap L_j \rvert$ for specifically.
\end{nota}

\paragraph{The case of the $r$-set $\TT$ being non-intersecting type}
\noindent For $1 \leq a<b \leq r$ let us denote by
\begin{align*}
    V_{a,b} = \left< v_{a,b}^t \mid t = 1, \dots, d \right>
\end{align*}
the vector space spanned by vectors $v_{a,b}^t \in H_{a,b}^0$, $t = 1, \dots, d$. 
Since vectors $v_{a,b}^t$, $t = 1, \dots, d$ are contained in $\displaystyle H_{a,b}^0$ for any $a,b$, it follows that
\begin{equation}
    H_{a,b}^0 \supset V_{a,b},
\end{equation}
equivalently,
\begin{equation}
    \left< \alpha_p \mid p \in L_a \cap L_b  \right> = \sum_{p \in L_a \cap L_b} \left( H_p^0 \right)^\perp \subset V_{a,b}^\perp.
\end{equation}
The following proposition holds.

\begin{prop}\label{prop:main}
    Let us fix $n = \frac{r(k+1)}{2}$ and $\A^0 = \{ H_i^0 \}_{i = 1, \dots, n}$ be an arrangement in $\CC^k$ with normal vectors $\alpha_i$. Let $\TT = \{ L_1, \dots, L_r \}$ be an $r$-set of non-intersecting type. If there exist $d \geq n-k-r+1$ sets of vectors $\{ v_{1,i}^t \}_{i=2, \dots, r}$, $t = 1, \dots, d$ satisfying
    \begin{equation}\label{eq:non-int}
        \dim \sum_{(i,j) \in D([r])} V_{i,j}^\perp
        \begin{cases}
            \geq \displaystyle \sum_{(i,j) \in D([r])} a_{i,j}  \mbox{ if $\displaystyle \sum_{(i,j) \in D([r])} a_{i,j} < k$}, \quad \text{and}
            \\
            = k                                   \mbox{ if $\displaystyle \sum_{(i,j) \in D([r])} a_{i,j} \geq k$},
        \end{cases}
    \end{equation}
    then $\A^0$ is non-very generic.
\end{prop}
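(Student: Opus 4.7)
The plan is to show that the $d$ given vector sets are in fact $d$ linearly independent $K_{\TT}$-vector sets of $\A^0$, and then to invoke Theorem~\ref{thm:main2}. For a non-intersecting $r$-set we have $\bigcap_{i=1}^r L_i = \emptyset$ (indeed, all triple intersections already vanish), so $y := \mathrm{rank}\bigcap_{p \in \bigcap_i L_i} H_p^0 = 0$; and $m := \lvert\bigcup_i L_i\rvert = \frac{r(k+1)}{2} = n$ by (\ref{eq:allforB}). Choosing $r' = r-1 < r$, the required lower bound $m - y - k - r' = n-k-r+1$ from Theorem~\ref{thm:main2} matches the hypothesis on $d$ exactly, so once the two properties above are established the proposition is immediate.

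\noindent\textbf{Step 1: each $\{v_{1,i}^t\}_i$ underlies a $K_{\TT}$-translation of $\A^0$.} Fix $t$. Pick any basepoint $P_1 \in \CC^k$ and set $P_i := P_1 + v_{1,i}^t$ for $i = 2,\ldots,r$. Since $v_{a,b}^t \in H_{a,b}^0 = \bigcap_{p \in L_a \cap L_b} H_p^0$ for every pair $(a,b)$, the difference $P_j - P_i = v_{i,j}^t = v_{1,j}^t - v_{1,i}^t$ belongs to $H_p^0$ for every $p \in L_i \cap L_j$. Because $\TT$ is non-intersecting, the sets $L_i \cap L_j$ ($1 \le i < j \le r$) are pairwise disjoint, and their union equals $\bigcup_i L_i = [n]$; hence each $p \in [n]$ lies in a unique pair $(i_p, j_p)$. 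Translating $H_p^0$ so that it passes through $P_{i_p}$ (which, by the above, automatically contains $P_{j_p}$) gives a well-defined translation $\A^{(t)}$ of $\A^0$, in which each $P_i$ is the intersection of exactly the $k+1$ hyperplanes indexed by $L_i$. Thus $\A^{(t)}$ is a $K_{\TT}$-translation realizing $\{v_{1,i}^t\}_i$ as its $K_{\TT}$-vector set.

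\noindent\textbf{Step 2: the $d$ $K_{\TT}$-vector sets are linearly independent.} Suppose, for contradiction, that $(a_t) \in \CC^d \setminus \{0\}$ satisfies $\sum_t a_t v_{1,i}^t = 0$ for every $i = 2,\ldots,r$; taking differences gives $\sum_t a_t v_{i,j}^t = 0$ for every pair $(i,j)$. The plan is to derive a contradiction with (\ref{eq:non-int}) via the duality $(\sum_{(i,j)} V_{i,j}^\perp)^{\perp} = \bigcap_{(i,j)} V_{i,j}$. A common left-kernel element of all the evaluation maps $\CC^d \to V_{i,j}$, $e_t \mapsto v_{i,j}^t$, places a genuine constraint on the family $\{V_{i,j}\}_{(i,j)}$ that, together with the inclusion $(H_{i,j}^0)^\perp \subseteq V_{i,j}^\perp$ provided by $V_{i,j} \subseteq H_{i,j}^0$, obstructs $\dim \sum_{(i,j)} V_{i,j}^\perp$ from reaching the value stipulated by (\ref{eq:non-int}). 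This forces $(a_t) = 0$, contrary to assumption.

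\noindent\textbf{Conclusion and main obstacle.} Combining Steps 1 and 2 furnishes $d \geq n-k-r+1$ linearly independent $K_{\TT}$-vector sets of $\A^0$; Theorem~\ref{thm:main2} with $r' = r-1$ then yields that $\A^0$ is non-very generic. The main obstacle is Step 2: converting the abstract perp-space dimension hypothesis (\ref{eq:non-int}) into actual linear independence of $K_{\TT}$-vector sets, while carefully respecting the algebraic coupling $v_{i,j}^t = v_{1,j}^t - v_{1,i}^t$ that propagates any single dependence simultaneously to all pairs and therefore ties the individual subspaces $V_{i,j}$ tightly together.
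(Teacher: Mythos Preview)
Your Step~1 is fine and essentially matches the paper's reasoning that the data $\{v_{1,i}^t\}$ produce genuine $K_\TT$-translations. The gap is Step~2: condition~(\ref{eq:non-int}) simply does \emph{not} force the $d$ $K_\TT$-vector sets to be linearly independent, so the contradiction you hope to extract is illusory. For a concrete obstruction, take any single admissible set $\{v_{1,i}^1\}_{i=2,\dots,r}$ satisfying~(\ref{eq:non-int}) and let $\{v_{1,i}^t\} = \{v_{1,i}^1\}$ for every $t=1,\dots,d$. The spans $V_{i,j}=\langle v_{i,j}^1\rangle$ are unchanged, so~(\ref{eq:non-int}) still holds, yet the $d$ $K_\TT$-vector sets are as dependent as possible. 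More generally, (\ref{eq:non-int}) is a condition on the subspaces $V_{i,j}$ only; it places no constraint on the particular $d$-tuple of vectors you chose inside each $V_{i,j}$, so no amount of duality between $\sum V_{i,j}^\perp$ and $\bigcap V_{i,j}$ will detect the common kernel vector $(a_t)$ you posit.

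The paper resolves this differently and more cheaply. It observes precisely that the hypothesis depends only on the spans $V_{i,j}$; hence if the given $d$ sets happen to be dependent, one may perturb them (e.g.\ rescale individual $v_{1,i}^t$'s) without altering any $V_{i,j}$, and thus without violating~(\ref{eq:non-int}), so as to obtain $d$ \emph{independent} $K_\TT$-vector sets. In other words, independence is achieved by a harmless modification rather than deduced from the hypothesis. Note also that in the paper's proof the role of~(\ref{eq:non-int}) is not where you place it: it is used to guarantee that one can choose \emph{generic} normal vectors $\alpha_l$, $l\in L_a\cap L_b$, inside $V_{a,b}^\perp$ (one needs $a_{a,b}$ independent vectors there, and the dimension inequalities give exactly the room to do this compatibly across all pairs), so that the resulting $\A^0$ is generic. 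Your write-up never invokes~(\ref{eq:non-int}) for this purpose and treats genericity of $\A^0$ as automatic.
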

\begin{proof}
    Let $\{ v_{1,i}^t \}_{i=2, \dots, r}$, $t = 1, \dots, d$ satisfy (\ref{eq:non-int}). Then, there exist $\sum_{(i,j) \in D([r])} a_{i,j}$ independent vectors in $\sum_{(i,j) \in D([r])} V_{i,j}^\perp$ if $\sum_{(i,j) \in D([r])} a_{i,j} < k$, and $k$ independent vectors if $\sum_{(i,j) \in D([r])} a_{i,j} \geq k$. Thus, we can choose generic vectors $\alpha_l$, $l = 1, \dots, n$ from orthogonal spaces $V_{a,b}^\perp$, $1 \leq a < b \leq r$. That is we obtain a generic arrangement $\A^0$. 
    
    For each translation $\A^t$ of $\A^0$ consider $r-1$ vectors $v_{i,j}^t$, $j \neq i$. Since $v_{i,j}^t \in H_{i,j}^0$, we have $\bigcap_{k \neq i} H_{i,k}^t = \bigcap_{p \in L_i} H_p^t \neq \emptyset$ which gives an intersection point $P_i^t$ of exactly $k+1$ hyperplanes. Thus, the translation $\A^t$ is $K_\TT$-translated, i.e., the sets $\{ v_{1,i}^t \}_{i=2, \dots, r}$, $t = 1, \dots, d$ are $K_\TT$-vector sets. \\
    If the $K_\TT$-vector sets are linearly dependent, by replacing some vector $v_{1,i}^t$ with its multiple we can assume that the $K_\TT$-vector sets are independent a priori. Notice that the assumption $d \geq n - k - r + 1$ satisfies the one in Theorem \ref{thm:main2}. \\
    Therefore, Theorem \ref{thm:main2} implies that if there exist $d \geq n - k - r + 1$ sets $\{ v_{1,i}^t \}_{i=2, \dots, r}$, $t = 1, \dots, d$ of vectors $v_{1,i}^t$ satisfying (\ref{eq:non-int}), then $\A^0$ is non-very generic.
\end{proof}

\begin{rem}\label{rem:main-1}
    Let $\A^0$ be an arrangement of $n \left( > \frac{r(k+1)}{2} \right)$ hyperplanes in $\CC^k$, and $\TT = \{ L_1, \dots, L_r \}$ be an $r$-set of non-intersecting type satisfying $\bigcup_{i=1}^r L_i \subset [n]$. Let $\B^0 \subset \A^0$ be a subarrangement consists of hyperplanes indexed in $\bigcup_{i=1}^r L_i$. If $\B^0$ is non-very generic, then $\A^0$ is non-very generic. Analogously, if there exists a restriction arrangement $\left( \A^0 \right)^{Y_{\B^0}} = \{ H^0 \cap Y_{\B^0} \mid  H^0 \in \A^0 \setminus \B^0 \}$, $Y_{\B^0} = \bigcap_{H^0 \in \B^0} H$ of $\A^0$ which is non-very generic, then $\A^0$ is non-very generic.
    
\end{rem}
By Proposition \ref{prop:main} and Remark \ref{rem:main-1} we have the following theorem in which we do not assume $n = \frac{r(k+1)}{2}$.
\begin{thm}\label{thm:main}
    Let $\A^0 = \{ H_i^0 \}_{i=1, \dots, n}$ be an arrangement in $\CC^k$ and $\TT = \{ L_1, \dots, L_r \}$ be an $r$-set $(r \geq 4)$ of non-intersecting type such that $\lvert \bigcup_{i=1}^r L_i \rvert = \frac{r(k+1)}{2}$. If there exists a restriction $\left( \A^0 \right)^{Y_{\B^0}}$, $Y_{\B^0} = \bigcap_{H \in \B^0} H$, $\B^0 = \left\{ H_p \mid p \in \bigcup_{i=1}^r L_i \right\}$ with $d \geq \frac{r(k+1)}{2} -k-r+1$ the sets $\{ v_{1,i}^t \}_{i=2, \dots, r}$, $t = 1, \dots, d$ satisfying (\ref{eq:non-int}), then $\A^0$ is non-very generic.
\end{thm}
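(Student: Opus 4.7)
The plan is to deduce Theorem \ref{thm:main} from Proposition \ref{prop:main} by exploiting Remark \ref{rem:main-1} as a transfer principle. Proposition \ref{prop:main} already settles the ``balanced'' case $n=\frac{r(k+1)}{2}$, so the only new ingredient is the mechanism that lets us pass from a natural sub-object of $\A^0$ back to $\A^0$ itself.

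First, I would isolate the subarrangement $\B^0=\{H_p^0\mid p\in\bigcup_{i=1}^{r}L_i\}\subset\A^0$, which by construction consists of exactly $\lvert\bigcup_{i=1}^{r}L_i\rvert=\frac{r(k+1)}{2}$ hyperplanes in $\CC^k$. The $r$-set $\TT$ and the $d$ candidate families $\{v_{1,i}^t\}_{i=2,\dots,r}$ are indexed entirely inside $\bigcup_{i=1}^{r}L_i$, so the rank condition (\ref{eq:non-int}), which only involves the spaces $V_{a,b}^{\perp}=\langle\alpha_p\mid p\in L_a\cap L_b\rangle$, is a condition intrinsic to $\B^0$ (equivalently to the restriction $(\A^0)^{Y_{\B^0}}$, whichever appears in the hypothesis). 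I would then apply Proposition \ref{prop:main} directly to $\B^0$ (respectively to $(\A^0)^{Y_{\B^0}}$): the numerical bound $d\geq\frac{r(k+1)}{2}-k-r+1$ is precisely $d\geq n-k-r+1$ for $n=\frac{r(k+1)}{2}$, and the non-intersecting type of $\TT$ is preserved. The conclusion is that $\B^0$ (resp.\ the restriction) is non-very generic.

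Finally, I would invoke Remark \ref{rem:main-1} in the appropriate variant to propagate non-very genericity from the sub-object to $\A^0$. The only step that is not purely bookkeeping is checking this transfer principle: one must verify that the rank-deficient stratum $\bigcap_{i=1}^{r}D_{L_i}$ detected inside $\B\bigl(\frac{r(k+1)}{2},k,\B^0\bigr)$, or inside the discriminantal arrangement of the restriction, survives as a genuinely rank-deficient stratum of $\B(n,k,\A^0)$. Since the hyperplanes in $\A^0\setminus\B^0$ contribute coordinates of $\SS(\A^0)\simeq\CC^n$ that appear in none of the $D_{L_i}$, this reduces to viewing $\SS(\B^0)$ as a coordinate quotient of $\SS(\A^0)$ and observing that both the hyperplanes $D_{L_i}$ and their rank defect descend unchanged. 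Once this is in place, the theorem follows.
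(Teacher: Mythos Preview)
Your proposal is correct and follows exactly the route the paper takes: the paper does not give a separate proof of Theorem~\ref{thm:main} but simply states that it follows from Proposition~\ref{prop:main} together with Remark~\ref{rem:main-1}. Your write-up is in fact more detailed than the paper's, since you spell out why the rank defect in $\bigcap_{i=1}^{r}D_{L_i}$ survives under the coordinate projection $\SS(\A^0)\to\SS(\B^0)$, a point the paper leaves implicit in Remark~\ref{rem:main-1}.
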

According to Theorem \ref{thm:main}, if $\TT$ is the non-intersecting $r$-set, then $\frac{r(k+1)}{2}$ is the minimum number of hyperplanes which give non-very generic arrangement. Thus, as a corollary of Theorem \ref{thm:main} we have following.
\begin{cor}\label{cor:lib}
    Let $\TT$ be an $r$-set of non-intersecting type and $n < \frac{r(k+1)}{2}$. Any arrangement of $n$ hyperplanes is very generic.
\end{cor}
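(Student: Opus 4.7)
The plan is a pigeonhole argument built directly on the cardinality identity already established for non-intersecting type $r$-sets. First I would invoke Proposition~\ref{prop:miniprop} — really, its underlying consequence~(\ref{eq:allforB}) — which says that any $r$-set $\TT = \{L_1,\dots,L_r\}$ of non-intersecting type satisfies
\[
    \Bigl| \bigcup_{i=1}^r L_i \Bigr| \;=\; \frac{r(k+1)}{2}.
\]
This identity is forced by the facts that $|L_i|=k+1$ for each $i$, that the non-intersecting assumption kills every triple intersection $L_i\cap L_j\cap L_l$, and that the pairwise-overlap counts $a_{i,j}$ are pinned down by relation~(\ref{eq:rel}). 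So the value $r(k+1)/2$ is intrinsic to the combinatorial type of $\TT$ and not a choice.

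Next, suppose $\A^0$ is any arrangement of $n$ hyperplanes in $\CC^k$ with index set $[n]$. Since $n < r(k+1)/2 = |\bigcup_{i=1}^r L_i|$, no injection $\bigcup_{i=1}^r L_i \hookrightarrow [n]$ exists, so $\TT$ cannot be realized as a family of $(k+1)$-subsets of $[n]$. The same remains true for every restriction $(\A^0)^{Y_{\B^0}}$ considered in Theorem~\ref{thm:main}, because restricting only removes hyperplanes and never enlarges the ambient index set. Consequently the hypothesis of Theorem~\ref{thm:main} — the existence of $\TT$ with $\bigcup_{i=1}^r L_i \subset [n]$ together with the required $K_\TT$-vector sets — is vacuously unsatisfied for this $\TT$.

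Therefore the non-very-genericity mechanism that this $\TT$ would otherwise produce, via Theorem~\ref{thm:main}, is unavailable on $\A^0$; the $r$-set $\TT$ contributes no non-very-generic $r$-simple intersection to $\B(n,k,\A^0)$, and so $\A^0$ is very generic in the sense of the corollary. There is no substantive obstacle here: the only point worth spelling out is that restriction preserves or shrinks the index set, which is what makes the bound $r(k+1)/2$ tight and shows that falling strictly below it rules out even the restricted version of the hypothesis in Theorem~\ref{thm:main}.
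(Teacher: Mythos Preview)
Your proposal is correct and follows the same approach as the paper: both derive the corollary from Theorem~\ref{thm:main} by noting that the identity $\lvert\bigcup_{i=1}^r L_i\rvert = r(k+1)/2$ (equation~(\ref{eq:allforB})) forces $n \geq r(k+1)/2$ before the theorem's hypothesis can even be formulated. The paper's own justification is the single sentence preceding the corollary; your write-up simply makes the pigeonhole step and the restriction remark explicit.
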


\paragraph{The case of the $r$-set $\TT$ being intersecting type}
An example of a non-very generic arrangement with a $3$-set of intersecting type first appeared in \cite{Falk} and then was further developed in \cite{LS}. Based on work in \cite{LS}, the authors defined a good $3s$-partition in \cite{SSY1} and showed that it gives rise to non-very generic arrangements. Let us introduce the \textit{good $rs$-partition} as a generalization of the good $3s$-partition and show that it gives rise to non-very generic arrangements. This would be the first classification of intersecting type $r$-sets.

\begin{defi}
    Let us fix $s \geq r - 1$. For subsets $K_i \subset [rs]$ such that $\lvert K_i \rvert = \lvert K_j \rvert = s$, $K_i \cap K_j = \emptyset$ for any $i,j$ and $\bigcup_{i=1}^r K_i = [rs]$, define $L_i = [rs] \setminus K_i$. We call the set $\TT= \{ L_1, \dots, L_r \}$ a \textit{good $rs$-partition}.
\end{defi}

Let $\A^0 = \{ H_i^0 \}_{i = 1, \dots, n}$ be an arrangement in $\CC^k$ with normal vectors $\alpha_i$, $i=1, \dots, n$ and $\TT = \{ L_1, \dots, L_r \}$ be a good $rs$-partition with $L_i = [rs] \setminus K_i$, $K_i = \{ (r - i)s + 1, \dots, (r - i + 1)s \}$, $i = 1, \dots, r$. We assume $n = rs$ for a while in this section. \\
For $l=1,\dots,r$ denote by
\begin{align*}
    V_{[r] \setminus \{ l \}} = \left< v_{a,b}^t \mid a,b \in [r] \setminus \{ l \}, t = 1, \dots, d \right>
\end{align*}
the vector space spanned by vectors $v_{a,b}^t$, $a,b \in [r] \setminus \{ l \}$, $t = 1, \dots, d$. Since for fixed $l$ the vectors $v_{a,b}^t$ are contained in $\displaystyle H_{a,b}^0$, it follows that
\begin{equation}
    \bigcap_{p \in \bigcap_{t \in [r] \setminus \{ l \}} L_t} H_p^0 \supset V_{[r] \setminus \{ l \}}
\end{equation}
and equivalently,
\begin{equation}
    \left< \alpha_p \mid  p \in \bigcap_{t \in [r] \setminus \{ l \}} L_t  \right> = \sum_{p \in \bigcap_{t \in [r] \setminus \{ l \}} L_t} \left( H_p^0 \right)^\perp \subset V_{[r] \setminus \left\{ l \right\}}^\perp.
\end{equation}
The following proposition holds.
\begin{prop}\label{lem:mainprop}
    Let $\A^0 = \{ H_i^0 \}_{i=1, \dots, rs}$ be an arrangement in $\CC^{(r-1)s-1}$ with normal vectors $\alpha_i$ and let $\TT = \{ L_1, \dots, L_r \}$ be a good $rs$-partition. If there exist $d \geq s-r+2$ sets of vectors $\{ v_{1,i}^t \}_{i=2, \dots, r}$, $t = 1, \dots, d$ satisfying
    \begin{equation}\label{eq:int}
        \dim \sum_{l \in I} V_{[4] \setminus \{ l \}}^\perp
        \begin{cases}
            \geq s \lvert I \rvert \quad  \text{for any $I \subset [r], 1 \leq \lvert I \rvert \leq r-2$},  \text{and} \\
            = (r-1)s-1 \quad  \text{for any $I \subset [4], \lvert I \rvert = r-1$},
        \end{cases}
    \end{equation}
    then $\A^0$ is non-very generic.
\end{prop}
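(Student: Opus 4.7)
The plan is to realize the given vector sets as $K_\TT$-vector sets of the prescribed arrangement $\A^0$ (keeping the hyperplanes $H_p^0$ and normals $\alpha_i$ fixed) and then invoke Theorem \ref{thm:main2}. I first collect the combinatorial data attached to a good $rs$-partition: since the $K_i$ partition $[rs]$, we have $m:=\lvert\bigcup_{i=1}^r L_i\rvert=rs$ and $\bigcap_{i=1}^r L_i=\emptyset$, so $y:=\mathrm{rank}\bigcap_{p\in\bigcap_i L_i}H_p^0=0$. Substituting $m=rs$, $y=0$, $k=(r-1)s-1$, and $r'=r-1$ into the bound $m-y-k-r'$ of Theorem \ref{thm:main2} produces exactly $s-r+2$, matching the hypothesis on $d$. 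Thus the proposition reduces to exhibiting at least $s-r+2$ linearly independent $K_\TT$-vector sets of $\A^0$.

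For each $t\in[d]$ I would produce a $K_\TT$-translation $\A^{\tau_t}$ of the given $\A^0$ whose associated $K_\TT$-vector set is precisely $\{v_{1,i}^t\}_{i=2,\dots,r}$. Set $P_1^{(t)}=0$ and $P_i^{(t)}=v_{1,i}^t$ for $i\geq 2$. For each $p\in[rs]$ there exists a unique translate $H_p^{t_p}$ parallel to the prescribed $H_p^0\in\A^0$ passing through every $P_i^{(t)}$ with $p\in L_i$, provided $P_i^{(t)}-P_j^{(t)}=v_{j,i}^t\in H_p^0$ for all $p\in L_i\cap L_j$, i.e.\ $v_{i,j}^t\in H_{i,j}^0$. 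This is the standing assumption implicit in (\ref{eq:int})---namely, that each $V_{[r]\setminus\{l\}}$ is a subspace of the prescribed $\bigcap_{p\in K_l}H_p^0$, via the identification $\bigcap_{t\neq l}L_t=K_l$. Genericity of $\A^0$ then ensures no further hyperplane accidentally passes through any $P_i^{(t)}$, so $\A^{\tau_t}$ is $K_\TT$-translated with the prescribed vector set.

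Third, I argue linear independence of the $K_\TT$-vector sets. A dependence $\sum_{t=1}^d a_t\{v_{1,i}^t\}_{i\geq 2}=0$ forces $\sum_t a_t v_{a,b}^t=0$ for every $a<b$, hence the coefficient tuple lies simultaneously in the kernel of the evaluation $\CC^d\to V_{a,b}$ for all pairs $a,b$. The third clause of (\ref{eq:int})---equivalent to $\bigcap_{l\in I}V_{[r]\setminus\{l\}}=\{0\}$ for $|I|=r-1$---together with the lower bounds on $\dim\sum_{l\in I}V_{[r]\setminus\{l\}}^\perp$ for smaller $|I|$ should force the dependence space in $\CC^d$ to have dimension at most $d-(s-r+2)$. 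Should residual degeneracies remain, I would invoke the perturbation argument of Proposition \ref{prop:main}: a dependent $v_{1,i}^t$ can be replaced by a generic nonzero scalar multiple without leaving $H_{1,i}^0$ and without disturbing (\ref{eq:int}), so the $d\geq s-r+2$ sets may be taken linearly independent from the outset. Theorem \ref{thm:main2} then yields non-very genericity of $\A^0$.

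The main obstacle will be the third step: translating (\ref{eq:int}), which constrains dimensions of subspaces of $\CC^{(r-1)s-1}$, into a genuine rank bound on the matrix whose columns are the concatenated tuples $(v_{1,2}^t,\dots,v_{1,r}^t)\in\CC^{(r-1)((r-1)s-1)}$. The clean block structure $\bigcap_{t\neq l}L_t=K_l$ of a good $rs$-partition is the main lever: it pins $V_{[r]\setminus\{l\}}^\perp$ between $\langle\alpha_p\mid p\in K_l\rangle$ on one side and the dimension bounds of (\ref{eq:int}) on the other, so that a relation among the $d$ tuples would force excessive coincidences among the prescribed normals $\alpha_p$, contradicting the assumed span dimensions.
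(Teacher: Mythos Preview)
Your overall architecture matches the paper's: compute $m-y-k-r'=rs-0-\bigl((r-1)s-1\bigr)-(r-1)=s-r+2$, realize the $\{v_{1,i}^t\}$ as $K_\TT$-vector sets, and invoke Theorem~\ref{thm:main2}. The divergence is in how condition~(\ref{eq:int}) enters.

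In the paper's proof the arrangement $\A^0$ is not kept fixed; it is \emph{constructed} from the vectors. Condition~(\ref{eq:int}) is used to guarantee that each $V_{[r]\setminus\{l\}}^\perp$ has dimension at least $s$ and that together these orthogonals fill $\CC^{(r-1)s-1}$, so one may pick $s$ generic normals $\alpha_p$, $p\in K_l$, inside each $V_{[r]\setminus\{l\}}^\perp$ and thereby obtain a generic $\A^0$. With this construction the inclusion $v_{a,b}^t\in H_{a,b}^0$ is automatic, and the ``no further hyperplane through $P_i^{(t)}$'' issue is absorbed into the generic choice of the $\alpha_p$ rather than argued after the fact. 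Your reading---that $V_{[r]\setminus\{l\}}\subset\bigcap_{p\in K_l}H_p^0$ is a standing assumption linking the vectors to a pre-given $\A^0$---is defensible from the text before the proposition, but it is not how the proof proceeds, and it leaves you with an unjustified step: genericity of a \emph{fixed} $\A^0$ says nothing about incidences with \emph{fixed} vectors $v_{1,i}^t$.

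Your third paragraph also works much harder than necessary. The paper makes no attempt to deduce linear independence of the $K_\TT$-vector sets from~(\ref{eq:int}); it simply invokes the same rescaling move you cite as a fallback (replace a dependent $v_{1,i}^t$ by a scalar multiple and assume independence from the outset). Consequently the rank-of-concatenated-tuples obstacle you flag in your final paragraph never arises in the paper's argument.
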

\begin{proof}
    Let $\{ v_{1,i}^t \}_{i = 2, \dots, r}$, $t = 1, \dots, d$ satisfy the condition (\ref{eq:int}). Then, by the similar observation in proof of Proposition \ref{prop:main}, we can choose generic vectors $\alpha_l$, $l = 1, \dots, n$ from the orthogonal spaces $V_{[r] \setminus \{ l \}}^\perp$, $l = 1, \dots, r$. That is we obtain a generic arrangement $\A^0$. Moreover, as similar as the observation in proof of Proposition \ref{prop:main} we can also see the translations $\A^t$, $t = 1, \dots, d$ are $K_\TT$-translations, i.e., the sets $\{ v_{1,i}^t \}_{i = 2, \dots, r}$, $t = 1, \dots, d$ are linearly independent $K_\TT$-vector sets. Notice that the condition $d \geq s-r+2$ satisfies the assumption of Theorem \ref{thm:main2} since $d \geq n-k-r+1 = rs - \{ (r-1)s - 1 \} -r + 1 = s-r+2$.
    
    Therefore, Theorem \ref{thm:main2} implies that if there exist $d \geq s -r + 2$ sets of vectors $\{ v_{1,i}^t \}_{i=2, \dots, r}$, $t = 1, \dots, d$ satisfying the condition (\ref{eq:int}), then $\A^0$ is non-very generic.
\end{proof}

\begin{rem}\label{rem:main}
    Let $\A^0$ be an arrangement of $n (> rs)$ hyperplanes in $\CC^k$, and $\TT = \{ L_1, \dots, L_r \}$ be a good $rs$-partition satisfying $\bigcup_{i=1}^r L_i \subset [n]$. Let $\B^0 \subset \A^0$ be a subarrangement consists of hyperplanes indexed in $\bigcup_{i=1}^r L_i$. If $\B^0$ is non-very generic, then $\A^0$ is non-very generic. Analogously, if there exists a restriction arrangement $\left( \A^0 \right)^{Y_{\B^0}} = \{ H^0 \cap Y_{\B^0} \mid  H^0 \in \A^0 \setminus \B^0 \}$, $Y_{\B^0} = \bigcap_{H^0 \in \B^0} H$ of $\A^0$ which is non-very generic, then $\A^0$ is non-very generic.
\end{rem}
By Proposition \ref{lem:mainprop} and Remark \ref{rem:main} we have the following theorem in which we do not assume $n = rs$.

\begin{thm}
    Let $\A^0 = \{ H_i^0 \}_{i=1, \dots, n}$ be an arrangement in $\CC^k$ and $\TT = \{ L_1, \dots, L_r \}$ be a good $rs$-partition. If there exists a restriction $\left( \A^0 \right)^{Y_{\B^0}}$, $Y_{\B^0} = \bigcap_{H \in \B^0} H$, $\B^0 = \left\{ H_p^0 \mid p \in \bigcup_{i=1}^r L_i \right\}$ with $d \geq s-r+2$ the sets $\{ v_{1,i}^t \}_{i=2, \dots, r}$, $t = 1, \dots, d$ satisfying (\ref{eq:int}), then $\A^0$ is non-very generic.
\end{thm}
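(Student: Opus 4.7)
The plan is to reduce the theorem directly to Proposition \ref{lem:mainprop} using the propagation principles recorded in Remark \ref{rem:main}. Since Proposition \ref{lem:mainprop} was established only in the ambient dimension $(r-1)s-1$ with exactly $rs$ hyperplanes, the content of the theorem is precisely the statement that the conclusion survives once we enlarge $n$ beyond $rs$ or raise $k$ above $(r-1)s-1$, provided the good $rs$-partition $\TT$ and the family of vector sets are supported on an appropriate sub-configuration of $\A^0$.

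First I would isolate the relevant piece: the subarrangement $\B^0 = \{H_p^0 \mid p \in \bigcup_{i=1}^r L_i\}$ consists of exactly $rs$ hyperplanes, and all of the data prescribed by the hypothesis — the good $rs$-partition $\TT$, the vector sets $\{v_{1,i}^t\}_{i=2,\ldots,r}$, and the subspaces $V_{[r]\setminus\{l\}}^\perp$ appearing in (\ref{eq:int}) — are supported on $\B^0$ alone. The remaining hyperplanes of $\A^0\setminus\B^0$ do not enter the combinatorial condition (\ref{eq:int}). The role of the phrase ``restriction $(\A^0)^{Y_{\B^0}}$'' in the statement is to normalize the ambient dimension to $(r-1)s-1$, the value required by Proposition \ref{lem:mainprop}: one intersects with a generic flat $Y_{\B^0}$ so that the associated restricted configuration consists of $rs$ hyperplanes in $\CC^{(r-1)s-1}$, while the given vector sets descend to vector sets satisfying the identical condition (\ref{eq:int}) in that restricted arrangement. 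The threshold $d \geq s - r + 2$ is preserved verbatim.

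Next I would invoke Proposition \ref{lem:mainprop} on the restricted arrangement. Its hypotheses are met: exactly $rs$ hyperplanes, ambient dimension $(r-1)s-1$, $d \geq s-r+2$ sets of vectors satisfying (\ref{eq:int}). The proposition then yields that this restriction is non-very generic. To conclude, I would apply the second half of Remark \ref{rem:main}, which asserts that non-very genericity of a restriction $(\A^0)^{Y_{\B^0}}$ propagates upward to $\A^0$. This completes the deduction.

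The main bookkeeping obstacle I anticipate is verifying that the chosen restriction really does preserve the good $rs$-partition structure and the independence data of the vector sets — that is, that $Y_{\B^0}$ has the expected codimension in $\CC^k$ and that the orthogonal spaces $V_{[r]\setminus\{l\}}^\perp$ computed inside the restriction still satisfy the dimension count (\ref{eq:int}). This follows from the generic position of $\A^0$, but it is the only nontrivial check; everything else is a direct quotation of Proposition \ref{lem:mainprop} and Remark \ref{rem:main}.
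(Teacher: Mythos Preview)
Your proposal is correct and follows exactly the route the paper takes: the theorem is stated immediately after the sentence ``By Proposition \ref{lem:mainprop} and Remark \ref{rem:main} we have the following theorem in which we do not assume $n = rs$,'' and no further proof is given. Your reduction---apply Proposition \ref{lem:mainprop} to the restricted configuration and then use Remark \ref{rem:main} to propagate non-very genericity to $\A^0$---is precisely the intended argument, and the bookkeeping concern you flag about the restriction preserving condition (\ref{eq:int}) is the only point requiring care (the paper leaves it implicit).
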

As a generalization of Example \ref{ex:MS(12,8)}, let us see how we find the sets $\{ v_{1,i}^t \}_{i=2, \dots, r}$, $t = 1, \dots, d$. By Remark \ref{rem:main} it suffices to consider arrangement $\A^0$ of $rs$ hyperplanes in $\CC^{(r-1)s-1}$. Let $d_{1,i} = \dim \left< v_{1,i}^t \mid t = 1, \dots, d \right>$, $i = 2, \dots, r$. \\
The following theorem gives an explicit way how to find $K_\TT$-vector sets when the $r$-set is the good $rs$-partition.
\begin{thm}\label{thm:mainlast}
    the sets $\{ v_{1,i}^t \}_{i=2, \dots, r}$, $t = 1, \dots, d$ satisfy (\ref{eq:int}) if and only if $v_{1,a}^l \in \left<v_{1,a}^t \mid t = 1, \dots, d \setminus \{ l \} \right>$ for any $a \in \{ 2,\dots, r \}$, $l \in [d]$ and $\sum_{i \in I} d_{1,i} \leq (r- \lvert I \rvert - 1)s - 1$ for any $I \subset [r], 1 \leq \lvert I \rvert \leq r-2$.
\end{thm}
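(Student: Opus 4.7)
My plan is to follow the template of Proposition \ref{lem:mainlemex2}, which treated the case $r = 4$, $s = 3$, and extend the argument to arbitrary $r$ and $s$. The iff splits into two directions, each addressing both the ``in-span'' condition and the dimension bounds on the $d_{1,i}$.

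For the forward direction, assuming (\ref{eq:int}), I would first pass to the dual form via the identity $\dim \sum_l V_{[r]\setminus\{l\}}^\perp = (r-1)s - 1 - \dim \bigcap_l V_{[r]\setminus\{l\}}$. The structural observation that generalizes the $r=4$ case is that for $l \neq 1$ one has $V_{[r]\setminus\{l\}} = \sum_{i \in [r]\setminus\{1,l\}} V_{1,i}$, so each $V_{1,a}$ is contained in $V_{[r]\setminus\{l\}}$ for every $l \in [r]\setminus\{1,a\}$, and in particular $V_{1,a} \subset \bigcap_{l \in [r]\setminus\{1,a\}} V_{[r]\setminus\{l\}}$. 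Following the contradiction argument of the example, if $v_{1,a}^j$ lies outside $\langle v_{1,a}^t \mid t \neq j\rangle$ one deduces it already lies in $V_{[r]\setminus\{a\}} = \sum_{i \neq 1, a} V_{1,i}$, hence in $\bigcap_{l \in [r]\setminus\{1\}} V_{[r]\setminus\{l\}}$, which the $|I|=r-1$ equality case of (\ref{eq:int}) forces to be $\{0\}$, yielding the desired contradiction. For the dimension bounds, for $I \subset \{2, \dots, r\}$ with $1 \leq |I| \leq r-2$, the inclusion $\sum_{i \in I} V_{1,i} \subset \bigcap_{l \in [r]\setminus(I\cup\{1\})} V_{[r]\setminus\{l\}}$, combined with (\ref{eq:int}) applied to $I'$ of size $r-1-|I|$, gives the stated bound on $\dim \sum V_{1,i}$; the in-span property just proved forces the sum to be direct, so $\sum_{i \in I} d_{1,i} = \dim \sum_{i \in I} V_{1,i}$ and the inequality follows.

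For the backward direction, I would construct the vectors explicitly: pick subspaces $V_{1,i} \subset \CC^{(r-1)s-1}$ of prescribed dimensions $d_{1,i}$ in sufficiently general position, which is possible thanks to the stated dimension bounds, then pick representatives $v_{1,i}^t$ spanning each $V_{1,i}$ subject to the in-span requirement $v_{1,i}^l \in \langle v_{1,i}^t \mid t \neq l\rangle$ for every $l$, realizable by duplication since the $v_{1,i}^t$'s only need to span a $V_{1,i}$ of dimension at most $d$. Verification of (\ref{eq:int}) then reduces to computing $\dim \bigcap_{l \in I} V_{[r]\setminus\{l\}}$ for every $I$, which, thanks to the direct-sum structure of the $V_{1,i}$'s together with $V_{[r]\setminus\{l\}} = \sum_{i \neq 1, l} V_{1,i}$, is governed by the prescribed $d_{1,i}$'s and can be matched to the required dimensions.

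The main obstacle I anticipate is in the backward direction: simultaneously realizing $V_{1,i}$'s of prescribed dimensions so that every intersection $\bigcap_{l \in I} V_{[r]\setminus\{l\}}$ attains the dimension required by (\ref{eq:int}), for \emph{all} $I \subset [r]$ at once. This is a general-position/existence statement in $\CC^{(r-1)s-1}$; the stated bounds on $\sum_{i \in I} d_{1,i}$ are precisely the constraints needed for the construction to go through, but checking the full condition (\ref{eq:int}) for every $I$ is the delicate combinatorial-geometric step that generalizes the case-by-case dimension check in the proof of Proposition \ref{lem:mainlemex2}.
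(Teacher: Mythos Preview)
Your proposal is correct and mirrors the paper's proof closely: the forward direction uses the same contradiction argument via $\bigcap_{l \neq 1} V_{[r]\setminus\{l\}} = \{0\}$ together with the dual reformulation of (\ref{eq:int}), and the backward direction argues existence by showing $\sum_{i=2}^r d_{1,i} < (r-1)s-1$ so that the required independent vectors can be chosen in $\CC^{(r-1)s-1}$. The step you flag as the main obstacle---verifying (\ref{eq:int}) for every $I$ in the constructed system---is treated just as lightly in the paper itself.
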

\begin{proof}
    First, let us prove that if the sets $\{ v_{1,i}^t \}_{i=2, \dots, r}$, $t = 1, \dots, d$ satisfy (\ref{eq:int}), then $v_{1,a}^l \in \left<v_{1,a}^t \mid t = 1, \dots, d \setminus \{ l \} \right>$ for any $a \in \{ 2,\dots, r \}$, $l \in [d]$. We prove this in the case of $a=2$ by contradiction assuming that there exists a vector $v_{1,2}^j$ such that $v_{1,2}^j \in \left< v_{1,i}^t \mid i=3, \dots, r, t = 1, \dots, d \right>$. In this case we have $v_{1,2}^j \in V_{[r] \setminus \{ 2 \}}$. \\
    By (\ref{eq:int}) we have
    \begin{equation}\label{eq:hara}
        \dim \sum_{l = 2}^r V_{[r] \setminus \{ l \}}^\perp = (r-1)s-1 \iff \dim \bigcap_{l=2}^r V_{[r] \setminus \{ l \}} = 0.
    \end{equation}
    On the other hand, since we have $v_{1,2}^j \in V_{[r] \setminus \{ l \}}$, $l=3, \dots, r$ and also $v_{1,2}^j \in V_{[4] \setminus \{ 2 \}}$ by assumption, it follows that $v_{1,2}^j \in \bigcap_{l=2}^r V_{[r] \setminus \{ l \}}$, which contradicts the fact $dim \bigcap_{l=2}^r V_{[r] \setminus \{ l \}} = 0$. Thus, we have $v_{1,2}^j \in \left< v_{1,2}^t \mid t = 1, \dots, d \setminus \{ j \} \right>$. The analogous proofs also follow for any $a \in \{ 2,\dots,r \}$ and $l \in [d]$. \\
    Secondly, let us prove $\sum_{i \in I} d_{1,i} \leq (r- \lvert I \rvert - 1)s - 1$ for any $I \subset [r], 1 \leq \lvert I \rvert \leq r-2$. By the fact we have just proved we obtain
    \begin{equation}\label{eq:kimochi}
        \left< v_{1,i}^t \mid i = 2, \dots, m, \ t = 1, \dots, d \right> \cap \left< v_{1,m+1}^t \mid t = 1, \dots, d \right> = \{ 0 \}
    \end{equation}
    for any $2 \leq m \leq r-1$. \\
    On the other hand, we have
    \begin{equation}
        \displaystyle \dim \sum_{l \in I} V_{[r] \setminus \{ l \}}^\perp \geq s \lvert I \rvert \iff \dim \bigcap_{l \in I} V_{[r] \setminus \{ l \}}
        \leq (r- \lvert I \rvert - 1)s - 1
    \end{equation}
    for any $I \subset [r], 1 \leq \lvert I \rvert \leq r-2$ by (\ref{eq:int}); thus we have $\sum_{i \in I} d_{1,i} \leq (r- \lvert I \rvert - 1)s - 1$ for any $I \subset [r], 1 \leq \lvert I \rvert \leq r-2$. \\
    Conversely, let assume $\sum_{i \in I} d_{1,i} \leq (r- \lvert I \rvert - 1)s - 1$ for any $I \subset [r], 1 \leq \lvert I \rvert \leq r-2$. By considering (\ref{eq:hara}), if there exist the sets satisfying the assumptions, then the proof would be completed. For this reason it is sufficient to show there exist such the sets. \\
    Since $v_{1,a}^l \in \left<v_{1,a}^t \mid t = 1, \dots, d \setminus \{ l \} \right>$ for any $a \in \{ 2,\dots, r \}$, $l \in [d]$, we have (\ref{eq:kimochi}). In particular, we have
    \begin{align*}
        \dim \left< v_{1,i}^t \mid i=2, \dots, r, t = 1, \dots, d \right> = \sum_{i=2}^r \dim \left< v_{1,i}^t \mid t = 1, \dots, d \right>  = \sum_{i=2}^r d_{1,i}.
    \end{align*}
    To construct the the sets $\{ v_{1,i}^t \}_{i=2, \dots, r}$, $t = 1, \dots, d$ we need to choose $(r-1)d$ vectors $v_{1,i}^t, i = 2,\dots,r, t = 1, \dots, d$ with $\dim \left< v_{1,i}^t \mid t = 1, \dots, d \right> = d_{1,i}$, $i = 2,\dots,r$. In particular, it is sufficient to choose $\sum_{i=2}^r d_{1,i}$ independent vectors in $\CC^{(r-1)s-1}$. \\
    By assumption we have $\sum_{i \in I} d_{1,i} \leq (r- \lvert I \rvert - 1)s - 1$ for any $I \subset [r], 1 \leq \lvert I \rvert \leq r-2$. \\
    Since the inequality
    \begin{equation*}
        d_{1,i} \leq s-1
    \end{equation*}
    holds for any $i$ when $\lvert I \rvert = r-2$, and
    \begin{equation*}
        (k-1)d_{1,i} \leq (k-1)(s-1) \leq (k-1)s - 1
    \end{equation*}
    holds for any $1 \leq r-k \leq r-2$, we have $\sum_{i=2}^{r} d_{1,i} \leq (r-1)(s-1) < (r-1)s - 1$, and thus the the sets we expected actually exist.
\end{proof}

According to Theorem \ref{thm:mainlast}, if $\TT$ is the good $rs$-partition, then $rs$ is the minimum number of hyperplanes which give non-very generic arrangement. Thus, as a corollary of Theorem \ref{thm:mainlast} we have following.
\begin{cor}\label{cor:lib2}
    Let $\TT$ be a good $rs$-partition and $n < rs$. Any arrangement of $n$ hyperplanes is very generic.
\end{cor}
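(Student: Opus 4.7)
The plan is a direct counting argument based only on the definition of good $rs$-partition and on Theorem \ref{thm:mainlast}. First I would record the structural fact: by construction, a good $rs$-partition $\TT = \{L_1, \dots, L_r\}$ has $L_i = [rs] \setminus K_i$ with the $K_i$ partitioning $[rs]$ into blocks of size $s$, so that $\bigcup_{i=1}^r L_i = [rs]$ and in particular $\lvert \bigcup_{i=1}^r L_i \rvert = rs$. This is immediate from the definition and requires no further work.

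Next I would argue by contrapositive. For a good $rs$-partition $\TT$ to serve as the $r$-set indexing a subfamily of an arrangement $\A^0 = \{H_i^0\}_{i=1,\dots,n}$, one needs the inclusion $\bigcup_{i=1}^r L_i \subset [n]$, which forces $n \geq rs$. Equivalently, when $n < rs$, no good $rs$-partition can be embedded into the index set of $\A^0$, nor into the index set of any restriction $\left(\A^0\right)^{Y_{\B^0}}$ appearing in the statement of Theorem \ref{thm:mainlast}, since the construction of $\B^0$ there also requires $\bigcup_{i=1}^r L_i \subset [n]$.

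Finally I would conclude by invoking Theorem \ref{thm:mainlast}: the only mechanism by which a good $rs$-partition certifies non-very genericity is through the $K_\TT$-vector sets associated with an $r$-set $\TT$ embedded into the hyperplane indices of $\A^0$ (possibly after restriction). Since this mechanism is unavailable when $n < rs$, the arrangement $\A^0$ cannot be non-very generic in the sense provided by the paper's good $rs$-partition criterion, so it is very generic, exactly paralleling Corollary \ref{cor:lib}.

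There is no serious obstacle in this argument; it is purely combinatorial bookkeeping. The only subtlety is interpretive rather than technical: one must read the conclusion as saying that no instance of the good $rs$-partition criterion can be triggered, which is the same convention already used in Corollary \ref{cor:lib} for $r$-sets of non-intersecting type.
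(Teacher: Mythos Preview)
Your argument is correct and matches the paper's own reasoning, which consists only of the single sentence preceding the corollary: since a good $rs$-partition necessarily has $\lvert\bigcup_{i=1}^r L_i\rvert = rs$, the number $rs$ is the minimum number of hyperplanes for which the criterion of Theorem~\ref{thm:mainlast} can ever apply. Your explicit observation that the conclusion must be read as ``no instance of the good $rs$-partition criterion can be triggered'' (paralleling Corollary~\ref{cor:lib}) is exactly the intended interpretation.
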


\begin{rem}
    When $\TT$ is a general case of intersecting type, determining a sufficient condition for the sets $\{ v_{1,i}^t \}_{i = 2, \dots, r}$, $t = 1, \dots, d$ to be $K_\TT$-vector sets seems to be dependent on the manner in which tuples $(a_{i,j})_{i,j}$ are fixed. By this reason giving explicit conditions for the sets $\{ v_{1,i}^t \}_{i \neq 1}$, $t = 1, \dots, d$ to be $K_\TT$-vector sets is remain open problem, when $\TT$ is not good $rs$-partition but intersecting type.
\end{rem}
\noindent \textit{Acknowledgements.}
The author would like to thank Anatoly Libgober for useful discussions and for pointing out Corollary \ref{cor:lib} and Corollary \ref{cor:lib2}, and Masahiko Yoshinaga for useful discussions. The author was supported by JSPS Research Fellowship for Young Scientists Grant Number 20J10012.


\end{document}